\theoremstyle{definition}
\newtheorem{thm}{Theorem}[section]
\newtheorem{prop}[thm]{Proposition}
\newtheorem{df}[thm]{Definition}
\newtheorem{lem}[thm]{Lemma}
\newtheorem{rem}[thm]{Remark}
\newtheorem{cor}[thm]{Corollary}
\newtheorem{ex}[thm]{Example}
\newtheorem*{ack}{Acknowledgments}
\newtheorem*{mt}{Main Theorem}
\def\ord{\mathop{\mathrm{ord}}\nolimits}
\def\rk{\mathop{\mathrm{rank}}\nolimits}
\def\Hom{\mathop{\mathrm{Hom}}\nolimits}
\title[Non-symplectic automorphisms on $K3$ surfaces]
{Classification of non-symplectic automorphisms on $K3$ surfaces which act trivially on the N\'{e}ron-Severi lattice}
\author[S.~Taki]{Shingo Taki}
\address{School of Information Environment, Tokyo Denki University,
2-1200 Muzai Gakuendai, Inzai-shi, Chiba 270-1382, Japan}
\email{taki@sie.dendai.ac.jp}
\date{\today}
\subjclass[2010]{Primary~14J28; Secondary~14J50}
\keywords{$K3$ surface, non-symplectic automorphism}
\begin{document}

\begin{abstract}
We treat non-symplectic automorphisms on $K3$ surfaces 
which act trivially on the N\'{e}ron-Severi lattice.
In this paper, we classify non-symplectic automorphisms of prime-power order, 
especially 2-power order on $K3$ surfaces, i.e., we describe their fixed locus.
\end{abstract}

\maketitle

\section{Introduction}\label{Introduction}
Let $X$ be a $K3$ surface. 
In the following, we denote by $S_{X}$, $T_{X}$ and $\omega _{X}$ 
the N\'{e}ron-Severi lattice, the transcendental lattice and a nowhere vanishing 
holomorphic $2$-form on $X$, respectively.

An automorphism of $X$ is \textit{symplectic} 
if it acts trivially on $\mathbb{C} \omega _{X}$.
This paper is devoted to study of \textit{non}-symplectic automorphisms 
of prime-power order for which act trivially on $S_{X}$.  
The study of non-symplectic automorphisms of $K3$ surfaces was pioneered by V.V.~Nikulin.

We suppose that $\varphi $ is a non-symplectic automorphism of order $I$ on $X$ such that 
$\varphi ^{\ast }\omega _{X} =\zeta _{I} \omega _{X}$ where $\zeta _{I}$ is a primitive $I$-th root of unity.
Then $\varphi ^{\ast }$ has no non-zero fixed vectors in $T_{X}\otimes \mathbb{Q}$ and hence  
$\Phi (I)$ divides $\rk T_{X}$, where $\Phi $ is the Euler function.  
In particular $\Phi (I)\leq \rk T_{X}$ and hence $I\leq 66$ 
\cite[Theorem 3.1 and Corollary 3.2]{Ni2}.

The following proposition was announced by Vorontsov \cite{Vo} and then 
it was proved by Kondo \cite{Kondo1}. 

\begin{prop}\label{vo}
Let $\varphi $ be a non-symplectic automorphism on $X$ which acts trivially on $S_{X}$.
Then the order of $\varphi $
is prime-power; 
$p^{k}=2^{\alpha } \ (1\leq \alpha \leq 4)$, $3^{\beta } \ (1\leq \beta \leq 3)$, $5^{\gamma } \ (1\leq \gamma \leq 2)$, 
7, 11, 13, 17 or 19.
Moreover $S_{X}$ is a $p$-elementary lattice, that is,
$S_{X}^{\ast }/S_{X}$ is a $p$-elementary group where $S_{X}^{\ast }=\Hom (S_{X},\mathbb{Z})$.
\end{prop}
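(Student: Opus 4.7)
The plan is to transfer the hypothesis that $\varphi^{*}$ acts trivially on $S_{X}$ to a statement about the action on $T_{X}$, and then use Nikulin's theory of lattices equipped with an isometry acting trivially on the discriminant form. The first step is to observe that because $H^{2}(X,\mathbb{Z})$ is unimodular, the inclusion $S_{X}\oplus T_{X}\hookrightarrow H^{2}(X,\mathbb{Z})$ induces an anti-isometry of discriminant forms $q_{S_{X}}\cong -q_{T_{X}}$, and a pair $(\psi_{S},\psi_{T})\in O(S_{X})\times O(T_{X})$ extends to an isometry of $H^{2}(X,\mathbb{Z})$ exactly when the induced actions on the two discriminant groups agree under this identification. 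Applied to $\varphi^{*}$, the triviality on $S_{X}$ forces $\sigma:=\varphi^{*}|_{T_{X}}$ to act trivially on $A_{T_{X}}=T_{X}^{*}/T_{X}$, which is the key ingredient for all that follows.

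Let $I=\ord(\varphi)$. By the paragraph preceding the statement, $\sigma$ has no non-zero fixed vectors on $T_{X}$, so its characteristic polynomial is a product of $d$-th cyclotomic polynomials for various $d\mid I$, $d>1$. To force $I$ to be a prime power I would argue as follows: if some $d\mid I$ is divisible by two distinct primes then $\zeta_{d}-1$ is a unit in $\mathbb{Z}[\zeta_{d}]$, so $\sigma-1$ is an automorphism of the corresponding $\mathbb{Z}[\zeta_{d}]$-sublattice $M\subset T_{X}$; combined with $(\sigma-1)T_{X}^{*}\subset T_{X}$ this forces $M^{*}\subset M$, making $M$ an even unimodular sublattice of $T_{X}$, and the $8$-divisibility of its signature conflicts with the fact that $T_{X}$ has signature $(2,\rk T_{X}-2)$ unless the corresponding multiplicity is zero. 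Hence $I=p^{a}$. Once the order is a prime power, all eigenvalues of $\sigma$ are $p$-power roots of unity and $\sigma-1$ is invertible on $T_{X}\otimes\mathbb{Q}$; a norm computation in $\mathbb{Q}(\zeta_{p^{a}})/\mathbb{Q}$ applied to $(\sigma-1)T_{X}^{*}\subset T_{X}$ then yields $p\cdot T_{X}^{*}\subset T_{X}$, i.e.\ $T_{X}$ is $p$-elementary, and so is $S_{X}$ via the anti-isometry of discriminant forms.

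It remains to pin down the allowed exponents. The inequality $\Phi(I)\leq\rk T_{X}\leq 21$ is already enough to eliminate $I=3^{4},5^{3},7^{2}$ and all primes $\geq 23$ at one stroke, so only the prime powers in the stated list and the single extra case $I=2^{5}=32$ survive. Ruling out $I=32$ is what I expect to be the main obstacle, since $\Phi(32)=16\leq 21$ leaves it formally possible. Its exclusion should combine the Rudakov--Shafarevich--Nikulin classification of $2$-elementary hyperbolic lattices $S_{X}$ of rank $\leq 6$ (forced by $\rk T_{X}\geq 16$) with the fact that the positive two-plane of $T_{X}\otimes\mathbb{R}$ must coincide with $\mathbb{R}\omega_{X}+\mathbb{R}\overline{\omega_{X}}$ and hence with the $\zeta_{32}$-\,/\,$\overline{\zeta_{32}}$-eigenspace; checking the resulting short list of candidate $S_{X}$ against the existence of a compatible $\sigma$ on an even $2$-elementary $T_{X}$ of signature $(2,\rk T_{X}-2)$ should then eliminate $I=32$ and leave exactly the list stated.
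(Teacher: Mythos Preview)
The paper does not give its own proof of this proposition: it is stated with attribution to Vorontsov and Kond\^o and used as a black box. So there is no argument in the paper to compare your proposal against directly.

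On the substance of your proposal, the opening mechanism is exactly the right one: triviality of $\varphi^{*}$ on $S_{X}$ forces $\sigma:=\varphi^{*}|_{T_{X}}$ to act trivially on $A_{T_{X}}$, i.e.\ $(\sigma-1)T_{X}^{*}\subset T_{X}$. Two steps after that, however, need repair.

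First, you should not be reasoning about an arbitrary divisor $d\mid I$. By the minimality of $T_{X}$ as a rational Hodge structure (this is precisely the content of \cite[Theorem~3.1]{Ni2}, restated in the paper as Lemma~\ref{sayou}(1)), the minimal polynomial of $\sigma$ on $T_{X}$ is the single cyclotomic polynomial $\Phi_{I}$, so the characteristic polynomial is $\Phi_{I}^{q}$. Without this, your ``$\mathbb{Z}[\zeta_{d}]$-sublattice $M$'' is only primitive in $T_{X}$, and the inference ``$(\sigma-1)T_{X}^{*}\subset T_{X}$ forces $M^{*}\subset M$'' is not justified: the restriction $T_{X}^{*}\to M^{*}$ lands in a lattice strictly between $M$ and $M^{*}$ in general. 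With the single-factor input one has $M=T_{X}$, and the step becomes the clean estimate $|A_{T_{X}}|\ \big|\ |\det(\sigma-1)|=|\Phi_{I}(1)|^{q}$, which is $p^{q}$ if $I$ is a $p$-power and $1$ otherwise.

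Second, and more seriously, the ``$8$-divisibility signature conflict'' does not produce a contradiction. When $I$ is not a prime power the computation above gives $A_{T_{X}}=0$, i.e.\ $T_{X}$ (equivalently $S_{X}$) is \emph{unimodular}; and signatures $(2,0),(2,8),(2,16)$ are perfectly compatible with this. That case genuinely occurs, as the paper itself records in Theorem~\ref{unimodular} (orders $66,44,42,\ldots$ with $S_{X}\in\{U,\,U\oplus E_{8},\,U\oplus E_{8}^{\oplus 2}\}$). So the honest conclusion of this line of argument is ``either $I$ is a prime power and $S_{X}$ is $p$-elementary, or $S_{X}$ is unimodular''; compare the sentence immediately preceding Theorem~\ref{unimodular}, where the paper uses Proposition~\ref{vo} in exactly this contrapositive form. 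Your outline for excluding $I=32$ is reasonable, but in the paper this too is simply deferred to the cited references.
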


Non-symplectic automorphisms of prime order have been studied by several authors e.g.
Nikulin \cite{Ni3}, Oguiso, Zhang \cite{13-19}, \cite{11},
Artebani, Sarti \cite{AS} and Taki \cite{Taki}.
Recently, we have the classification of 
non-symplectic automorphisms of prime order on $K3$ surfaces \cite{AST}.

\begin{thm}\cite[Theorem 1.2]{AST}\label{cla-orderp}
We assume that $S_{X}$ is $p$-elementary.
Let $r$ be the Picard number of $X$  
and let $a$ be the minimal number of generators of $S_{X}^{\ast }/S_{X}$.

Then there exists a non-symplectic automorphism $\varphi $ of order $p$ on $X$ if and only if
$22-r -(p-1)a \in 2(p-1)\mathbb{Z}_{\geq 0}$.

Moreover if $X$ has a non-symplectic automorphism $\varphi $ of order $p$ which acts trivially on $S_{X}$  
then the fixed locus $X^{\varphi }:=\{x\in X| \varphi (x)=x \}$ has the form 
\begin{equation*}
X^{\varphi }=
\begin{cases}
\phi  & \hspace{-2cm} \text{if $S_{X}=U(2)\oplus E_{8}(2)$}, \\
C^{(1)}\amalg C^{(1)} & \hspace{-2cm} \text{if $S_{X}=U\oplus E_{8}(2)$}, \\
\{ P_{1}, \dots , P_{M} \} \amalg C^{(g)} \amalg E_{1} \amalg \dots \amalg E_{N} & \text{otherwise},
\end{cases}
\end{equation*}
and 

\[ g=\frac{22-r-(p-1)a}{2(p-1)}, \] 

\begin{equation*}
M = 
\begin{cases}
0 & \text{if $p=2$}, \\
\dfrac{(p-2)r +22}{p-1} &  \text{if $p=17, 19$}, \\
\dfrac{(p-2)r -2}{p-1} & \text{otherwise},
\end{cases}
\end{equation*}

\begin{equation*}
N = 
\begin{cases}
\dfrac{r-a}{2} & \text{if $p=2$}, \\
0 &  \text{if $p=17, 19$}, \\
\dfrac{2+r-(p-1)a}{2(p-1)} & \text{otherwise}, 
\end{cases}
\end{equation*}
where $P_{j}$ is an isolated point, 
$C^{(g)}$ is a non-singular curve with genus $g$ and 
$E_{k}$ is a non-singular rational curve.
\end{thm}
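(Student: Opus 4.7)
The plan is to prove the two claims — the numerical existence criterion and the explicit form of $X^{\varphi}$ — separately, with the first handled by lattice theory and the second by Lefschetz fixed-point formulas.

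First I would establish the existence criterion. As the introduction already notes, $\varphi^{\ast}$ has no non-zero fixed vectors in $T_{X}\otimes \mathbb{Q}$, so the characteristic polynomial of $\varphi^{\ast}|_{T_{X}}$ is a power of the $p$-th cyclotomic polynomial; hence $\rk T_{X}=m(p-1)$ for some $m\geq 1$. Combined with the $p$-elementary structure of $S_{X}$ and Nikulin's discriminant-form theory, the signature and parity constraints on $T_{X}$ translate into the condition $22-r-(p-1)a\in 2(p-1)\mathbb{Z}_{\geq 0}$. For the converse, I would construct an abstract even lattice of signature $(2,20-r)$ carrying the appropriate $\mathbb{Z}[\zeta_{p}]$-module structure and discriminant form, glue it to $S_{X}$ via Nikulin's gluing criterion to obtain a K3 lattice with a marked period, and appeal to surjectivity of the period map together with the Torelli theorem to realize the pair $(X,\varphi)$.

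Next I would analyze $X^{\varphi}$ locally. At each fixed point $P$, the derivative $d\varphi_{P}$ acts on $T_{P}X$ with eigenvalues $(\zeta_{p}^{s},\zeta_{p}^{1-s})$, since the product must equal $\zeta_{p}$ because $\varphi^{\ast}\omega_{X}=\zeta_{p}\omega_{X}$. The case $s=0$ (equivalently $s=1$) corresponds to $P$ lying on a smooth fixed curve, while $s\in\{2,\dots,p-1\}$ gives an isolated fixed point; this already forces $M=0$ when $p=2$. Now applying the topological Lefschetz fixed point formula, and using that the sum of primitive $p$-th roots of unity equals $-1$ so that the trace of $\varphi^{\ast}$ on $T_{X}\otimes \mathbb{Q}$ is $-(22-r)/(p-1)$, I obtain
\[
\chi(X^{\varphi})=2+r-\frac{22-r}{p-1}.
\]
Decomposing $X^{\varphi}$ as a disjoint union of isolated points, smooth rational curves, and (by a standard argument for non-symplectic automorphisms) at most one smooth curve of genus $g>0$, this reads $\chi(X^{\varphi})=M+(2-2g)+2N$, giving one relation among $M$, $N$, $g$.

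The second main tool is the holomorphic Lefschetz fixed point formula $L(\varphi,\mathcal{O}_{X})=1+\zeta_{p}^{-1}$. Splitting the right-hand side into the contribution $1/((1-\zeta_{p}^{s})(1-\zeta_{p}^{1-s}))$ from each isolated fixed point of type $s$, and the standard curve contribution involving $g(C)$ and $C^{2}$, I would match this identity with the topological equation: adjunction fixes $C^{2}=-2$ for the rational fixed curves and $C^{2}=2g-2$ for the unique curve of higher genus, so the resulting linear system solves for $g$, $M$, and $N$ in each of the cases $p=2$, $p=17,19$, and the remaining primes. The main obstacle is the detailed bookkeeping of isolated fixed points by eigenvalue type $s\in\{2,\dots,p-1\}$ and proving the uniqueness of the fixed curve of positive genus, which is essential for the formulas to close; the edge cases $p=17,19$ are particularly delicate because $\Phi(p)$ nearly saturates $\rk T_{X}$, leaving little room in the discriminant-form analysis and forcing $N=0$ with all isolated points concentrated in a restricted set of eigenvalue types, yielding the anomalous value of $M$.
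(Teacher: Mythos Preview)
This theorem is not proved in the present paper: it is quoted verbatim from \cite[Theorem~1.2]{AST} and stated without any argument, serving only as background input for the order-$2^{k}$ analysis in later sections. There is therefore no proof here against which to compare your proposal.

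That said, your outline is broadly the strategy of \cite{AST}: the existence direction via Nikulin's discriminant-form theory plus Torelli/surjectivity, and the fixed-locus computation via the topological and holomorphic Lefschetz formulas. A few points in your sketch would need real work to close. First, the claim that $X^{\varphi}$ contains at most one curve of positive genus is not ``standard'' in the sense of being automatic; it requires an argument (e.g.\ via the quotient surface, Hodge index, or Smith theory), and your proposal only flags it as an obstacle. Second, the two exceptional $p=2$ cases $S_{X}=U(2)\oplus E_{8}(2)$ and $S_{X}=U\oplus E_{8}(2)$ (empty fixed locus, resp.\ two disjoint elliptic curves) fall outside your generic decomposition $\{P_{i}\}\amalg C^{(g)}\amalg E_{1}\amalg\cdots\amalg E_{N}$ and must be treated separately; your sketch does not address them. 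Third, for $p\geq 5$ the holomorphic Lefschetz formula produces one equation per nontrivial character, so controlling the individual multiplicities $m_{s}$ of isolated points of local type $(\zeta_{p}^{s},\zeta_{p}^{1-s})$ is genuinely delicate; you acknowledge this but do not indicate how to resolve it, and in \cite{AST} this is where most of the effort lies.
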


On the other hand, studies of prime power order have progressed, too. 
Sch\"{u}tt \cite{Schutt} classified $K3$ surfaces with non-symplectic
automorphisms whose order is 2-power and equals $\rk T_{X}$.

Kondo \cite{Kondo1} and Machida and Oguiso \cite{machida-oguiso} 
or Oguiso and Zhang \cite{13-19} have proved that 
the $K3$ surface with non-symplectic automorphisms of order 25 or 27,
respectively, is unique.
Recently, Taki \cite{Taki2} classified non-symplectic automorphisms of 3-power order. 
The following theorem is known.

\begin{thm}
\begin{itemize}
\item[(1)]
$X$ has a non-symplectic automorphism $\varphi$ of order 9 acting trivially on $S_{X}$ 
if and only if $S_{X}= U\oplus A_{2}$, $U\oplus E_{8}$, $U\oplus E_{6}\oplus A_{2}$ or $U\oplus E_{8}\oplus E_{6}$.
Moreover the fixed locus $X^{\varphi }$ has the form
\begin{equation*}
X^{\varphi }=
\begin{cases}
\{ P_{1}, P_{2}, \dots , P_{6} \} &  \text{if $S_{X}=U\oplus A_{2}$,} \\
\{ P_{1}, P_{2}, \dots , P_{10} \}\amalg E_{1} & \text{if $S_{X}=U\oplus E_{8}$ or $U\oplus E_{6}\oplus A_{2}$,}\\
\{ P_{1}, P_{2}, \dots , P_{14} \}\amalg E_{1}\amalg E_{2} & \text{if $S_{X}=U\oplus E_{8}\oplus E_{6}$.}
\end{cases}
\end{equation*}

\item[(2)]
$X$ has a non-symplectic automorphism $\varphi$ of order 27 acting trivially on $S_{X}$ 
if and only if $S_{X}= U\oplus A_{2}$.
Moreover the fixed locus $X^{\varphi }$ has the form
$X^{\varphi}=\{ P_{1}, P_{2}, \dots , P_{6} \}$.
\end{itemize}
Here we denote by $P_{i}$ an isolated point and by $E_{j}$ a non-singular rational curve.
\end{thm}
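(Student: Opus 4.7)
The plan is to exploit the chain $\langle\varphi^{9}\rangle\subseteq\langle\varphi^{3}\rangle\subseteq\langle\varphi\rangle$ so that at each level the cube map returns us to the order-$3$ setting of Theorem~\ref{cla-orderp}, and then to supplement with Lefschetz-type fixed-point calculations. For part~(1), suppose $\varphi$ has order $9$ and acts trivially on $S_{X}$. Then $\varphi^{3}$ is non-symplectic of order $3$ and also acts trivially on $S_{X}$, so Proposition~\ref{vo} forces $S_{X}$ to be $3$-elementary. Writing $\varphi^{\ast}\omega_{X}=\zeta_{9}\omega_{X}$, the endomorphism $\varphi^{\ast}|_{T_{X}}$ has no nonzero fixed vector in $T_{X}\otimes\mathbb{Q}$ and the $9$-th cyclotomic polynomial divides its characteristic polynomial, so $6\mid\rk T_{X}$ and $\rk T_{X}\in\{6,12,18\}$. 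Running through the Nikulin list of even $3$-elementary hyperbolic lattices of rank $r=22-\rk T_{X}\in\{4,10,16\}$ and imposing the extra constraint that $T_{X}$ carry a compatible $\mathbb{Z}[\zeta_{9}]$-module structure cuts the list down to the four claimed lattices.

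For the converse, I would construct for each of these four lattices an explicit model, typically a Weierstrass elliptic $K3$ surface with an obvious order-$9$ symmetry of the form $(x,y,t)\mapsto(\zeta_{3}x,-y,\zeta_{9}t)$, and identify its N\'eron--Severi lattice directly. For the fixed locus, I would first apply Theorem~\ref{cla-orderp} with $p=3$ to $\varphi^{3}$, obtaining a description of $X^{\varphi^{3}}$ as the disjoint union of isolated points, a single curve of positive genus, and some rational curves. Since $X^{\varphi}\subseteq X^{\varphi^{3}}$ and $\varphi$ restricts to an order-$1$ or $3$ automorphism on $X^{\varphi^{3}}$, each component is preserved (the positive-genus curve by genus-uniqueness; the rational curves and isolated points by matching combinatorial counts). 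Riemann--Hurwitz applied to the induced order-$3$ action on each curve, combined with the topological Lefschetz formula $\chi(X^{\varphi})=2+\mathrm{tr}(\varphi^{\ast}|H^{2}(X,\mathbb{Q}))$ and the holomorphic Lefschetz formula, then pins down the exact numbers of isolated points and rational curves as stated.

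For part~(2), if $\varphi$ has order $27$ then $\varphi^{3}$ has order $9$ and acts trivially on $S_{X}$, so by part~(1) $S_{X}$ is one of the four listed lattices. The stronger condition $\Phi(27)=18\mid\rk T_{X}\leq 21$ forces $\rk T_{X}=18$, leaving only $S_{X}=U\oplus A_{2}$; uniqueness of $X$ is then the result of Machida--Oguiso~\cite{machida-oguiso} or Oguiso--Zhang~\cite{13-19}. Since $X^{\varphi}\subseteq X^{\varphi^{3}}=\{P_{1},\dots,P_{6}\}$ by part~(1), and the topological Lefschetz formula applied to $\varphi$ gives $\chi(X^{\varphi})=6$ (the $18$ primitive $27$-th roots of unity appearing on $T_{X}$ having trace zero), all six points must be fixed by $\varphi$.

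The main obstacle I expect is the fixed-locus bookkeeping in part~(1): the total Euler characteristic alone does not distinguish, for example, between extra isolated $\varphi$-fixed points lying on the genus-$g$ curve of $\varphi^{3}$ versus on the rational curves, so one has to combine Riemann--Hurwitz with the holomorphic Lefschetz formula to pin down the exact distribution among components.
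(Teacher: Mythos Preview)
This theorem is not proved in the present paper: it is stated in the Introduction as a known result, with the proof deferred entirely to \cite{Taki2}. There is therefore no argument here to compare your proposal against.

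That said, your strategy mirrors closely the method the paper \emph{does} carry out for the $2$-power case (the Main Theorem), so it is very likely in the spirit of the actual proof in \cite{Taki2}: the reduction to the prime-order square via $\varphi^{3}$ and Theorem~\ref{cla-orderp}, the divisibility $\Phi(9)=6\mid\rk T_{X}$, the topological and holomorphic Lefschetz formulae, and the inclusion $X^{\varphi}\subset X^{\varphi^{3}}$ are exactly the tools used here for orders $4$, $8$, $16$. Your treatment of part~(2) is essentially complete.

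The one genuine gap in your sketch is the sentence ``imposing the extra constraint that $T_{X}$ carry a compatible $\mathbb{Z}[\zeta_{9}]$-module structure cuts the list down to the four claimed lattices.'' As stated this is too vague to be an argument. In the $2$-power case the paper needs two separate mechanisms to prune the list of $2$-elementary lattices: a purely lattice-theoretic obstruction (Proposition~\ref{delta}, giving the invariant $\delta=0$) and then further \emph{geometric} exclusions case by case (Proposition~\ref{ne-order4}, ruling out $U\oplus E_{8}(2)$, $U(2)\oplus E_{8}(2)$, $U\oplus D_{4}^{\oplus 3}$, $U\oplus D_{8}^{\oplus 2}$ by comparing the forced fixed curves with those of the square). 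For $p=3$ and ranks $r=4,10,16$ there are more $3$-elementary candidates than the four you list (e.g.\ at $r=10$ one has $a=0,2,4,6$, not just $a=0,2$), and you will need the $3$-adic analogue of Proposition~\ref{delta} together with a geometric argument in the style of Proposition~\ref{ne-order4} to eliminate the extras. Until you supply those, the ``if'' direction of part~(1) is incomplete.
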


By Proposition \ref{vo}, if the order of a non-symplectic automorphism is non-prime-power 
then $S_{X}$ is unimodular.
The cases are studied by Kondo \cite{Kondo1}.

\begin{thm}\label{unimodular}
Let $\varphi$ be a non-symplectic automorphism on $X$ 
and $\Phi $ the Euler function.
\begin{itemize}
\item[(1)] If $S_{X}=U$, then $\ord \varphi |$66, 44 or 12.

\item[(2)] If $S_{X}=U \oplus E_{8}$, then $\ord \varphi |$42, 36 or 28.

\item[(3)] If $S_{X}=U \oplus E_{8}^{\oplus 2}$, then $\ord \varphi |$12.

\item[(4)] If $\Phi (\ord \varphi )=\rk T_{X}$, then $\ord \varphi=$
66, 44, 42, 36, 28 or 12. 
Moreover for $m=$ 66, 44, 42, 36, 28 or 12, there exists a unique (up to isomorphisms)
$K3$ surface with $\ord \varphi =m$.
\end{itemize}
\end{thm}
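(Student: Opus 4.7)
The plan is to combine the classification of indefinite even unimodular lattices with the Hodge-theoretic constraint on $\varphi^{\ast}$ and the Torelli theorem. Since $S_X$ is even unimodular of signature $(1,r-1)$, it must be one of $U$, $U\oplus E_8$, $U\oplus E_8^{\oplus 2}$, giving $\rk T_X\in\{20,12,4\}$. Writing $I=\ord\varphi$ and using that $\varphi^{\ast}$ has no nonzero fixed vectors on $T_X\otimes\mathbb{Q}$ and acts as $\zeta_I$ on $\mathbb{C}\omega_X$, one obtains $\Phi(I)\mid \rk T_X$, which reduces each case to a finite candidate list of orders.

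To pare these lists down to divisors of $\{66,44,12\}$, $\{42,36,28\}$, and $\{12\}$ respectively, I would exploit that $O(S_X)$ is finite, so some power $\varphi^{m}$ acts trivially on $S_X$. Whenever $\varphi^{m}$ is still non-symplectic and of prime-power order $p^k$, Proposition \ref{vo} forces $S_X$ to be $p$-elementary, and Theorem \ref{cla-orderp} (for $k=1$) together with analogous fixed-locus estimates for $k\geq 2$ imposes numerical conditions on $(\rk S_X,\rk T_X)$ incompatible with unimodularity unless $I$ divides one of the listed numbers. This rules out the extraneous candidates, e.g.\ $I\in\{5,8,10,25,50\}$ in the $U$ case and the analogous lists in the other two cases, establishing parts (1)--(3).

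For (4), the orders with $\Phi(I)=\rk T_X$ are exactly $I\in\{66,44\}$, $\{42,36,28\}$, $\{12\}$ in the three cases. In each, $T_X\otimes\mathbb{Q}$ is one-dimensional over $\mathbb{Q}(\zeta_I)$, so $T_X$ becomes a rank-one hermitian $\mathbb{Z}[\zeta_I]$-lattice of signature $(2,\rk T_X-2)$. Nikulin's classification of even indefinite lattices by genus, together with the constraint that $T_X$ be the orthogonal complement of the unimodular lattice $S_X$ in the $K3$ lattice, pins $T_X$ down up to isomorphism; the action of $\varphi^{\ast}$ is then determined since it must generate the $\mathbb{Z}[\zeta_I]$-module structure on the rank-one module $T_X$. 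The strong Torelli theorem yields uniqueness of $(X,\varphi)$, and surjectivity of the period map gives existence.

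The main obstacle will be the elimination step in (1)--(3): orders such as $25$ or $9$ pass the Euler-function test and can only be excluded by a careful combination of Proposition \ref{vo} applied to suitable powers of $\varphi$ with the fixed-locus formulas of Theorem \ref{cla-orderp}, plus a direct analysis of the $\mathbb{Z}[\zeta_{p^k}]$-module structure forced on a sublattice of $T_X$ when $k\geq 2$. A second, more structural difficulty in (4) is verifying that the genus of hermitian $\mathbb{Z}[\zeta_I]$-lattices consistent with signature $(2,\rk T_X-2)$ and unimodular complement $S_X$ contains exactly one isomorphism class, case by case for each of the six exceptional orders.
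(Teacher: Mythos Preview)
The paper does not prove Theorem~\ref{unimodular}; it is cited from Kondo~\cite{Kondo1} (see the sentence immediately preceding the statement). There is thus no in-paper proof to compare your proposal against.

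Your sketch is nonetheless a plausible route, broadly in the spirit of Kondo's argument. Two remarks. First, in the intended setting $\varphi$ already acts trivially on $S_X$ (this is the standing hypothesis both of Kondo's paper and of the present one), so the detour through ``some power $\varphi^m$ acts trivially on $S_X$'' is unnecessary: one applies Proposition~\ref{vo} and the divisibility $\Phi(I)\mid\rk T_X$ directly. Second, your elimination step conflates two different mechanisms. For candidates with an odd prime factor $p$ (e.g.\ $5\mid I$ in case~(1)), applying Theorem~\ref{cla-orderp} to $\varphi^{I/p}$ does give a contradiction (for $p=5$, $r=2$, $a=0$ one needs $20\in 8\mathbb{Z}_{\ge0}$, which fails). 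But for the pure $2$-power candidate $I=8$ with $S_X=U$, Theorem~\ref{cla-orderp} sees only $\varphi^{4}$ and $\varphi^{2}$, both of which \emph{are} compatible with $S_X=U$; your appeal to ``analogous fixed-locus estimates for $k\ge2$'' is essentially invoking Corollary~\ref{cor-8} of the present paper, which is logically fine but anachronistic relative to~\cite{Kondo1}. Kondo's own exclusions are direct lattice-theoretic arguments on $T_X$, showing that the given unimodular lattice admits no isometry with characteristic polynomial $\Phi_I^{q}$ and the correct signature on the $\zeta_I$-eigenspace. Your plan for~(4), via the strong Torelli theorem together with uniqueness of the rank-one hermitian $\mathbb{Z}[\zeta_I]$-lattice in each genus, is exactly Kondo's method; as you note, the case-by-case verification of that uniqueness is where the work lies.
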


Hence, in order to classify non-symplectic automorphisms on $X$ which act trivially on $S_{X}$, 
we need the complete classification of non-symplectic
automorphisms of 2-power order, i.e., generalization of Sch\"{u}tt's result.
The main purpose of this paper is to prove the following theorem.
See Section \ref{class} for some notations.

\begin{mt}\label{mt}
We assume that $S_{X}$ is 2-elementary.

\begin{itemize}
\item[(1)]
$X$ has a non-symplectic automorphism $\varphi$ of order 4 acting trivially on $S_{X}$ 
if and only if $S_{X}$ has $\delta _{S_{X}}=0$ and 
$S_{X} \neq U\oplus E_{8}(2)$, $U(2)\oplus E_{8}(2)$, $U\oplus D_{4}^{\oplus 3}$ and $U\oplus D_{8}^{\oplus 2}$.

Moreover the fixed locus $X^{\varphi }$ has the form
\begin{equation*}
X^{\varphi }=
\begin{cases}
\{ P_{1}, P_{2}, \dots , P_{4} \} & \text{if $\rk S_{X}=2$,} \\
\{ P_{1}, P_{2}, \dots , P_{6} \} \amalg E_{1} & \text{if $\rk S_{X}=6$,} \\
\{ P_{1}, P_{2}, \dots , P_{8} \} \amalg E_{1} \amalg E_{2} & \text{if $\rk S_{X}=10$,}\\
\{ P_{1}, P_{2}, \dots , P_{10} \} \amalg E_{1} \amalg E_{2} \amalg E_{3} & \text{if $\rk S_{X}=14$,}\\
\{ P_{1}, P_{2}, \dots , P_{12} \} \amalg E_{1} \amalg E_{2} \amalg E_{3} \amalg E_{4} & \text{if $\rk S_{X}=18$.}
\end{cases}
\end{equation*}

\item[(2)]
$X$ has a non-symplectic automorphism $\varphi$ of order 8 acting trivially on $S_{X}$ 
if and only if $S_{X}=U\oplus D_{4}$, $U(2)\oplus D_{4}$ or $U\oplus D_{4}\oplus E_{8}$.
Moreover the fixed locus $X^{\varphi }$ has the form
\begin{equation*}
X^{\varphi }=
\begin{cases}
\{ P_{1}, P_{2}, \dots , P_{6} \}\amalg E_{1} & \text{if $\rk S_{X}=6$,}\\
\{ P_{1}, P_{2}, \dots , P_{12} \}\amalg E_{1}\amalg E_{2} &  \text{if $\rk S_{X}=14$.} 
\end{cases}
\end{equation*}

\item[(3)]
$X$ has a non-symplectic automorphism $\varphi$ of order 16 acting trivially on $S_{X}$ 
if and only if $S_{X}=U\oplus D_{4}$ or $U\oplus D_{4}\oplus E_{8}$.
Moreover the fixed locus $X^{\varphi }$ has the form
\begin{equation*}
X^{\varphi }=
\begin{cases}
\{ P_{1}, P_{2}, \dots , P_{6} \}\amalg E_{1} & \text{if $S_{X}=U\oplus D_{4}$,}\\
\{ P_{1}, P_{2}, \dots , P_{12} \}\amalg E_{1}\amalg E_{2} &  \text{if $S_{X}=U\oplus D_{4}\oplus E_{8}$.} 
\end{cases}
\end{equation*}

\end{itemize}
Here, $P_{i}$ is an isolated point and $E_{j}$ is a non-singular rational curve.
\end{mt}

We summarize the contents of this paper.
In Section \ref{class}, we review the classification of even indefinite 2-elementary lattices. 
And we check the non-existence of lattice isometries of order 4.
As a result, we get the N\'{e}ron-Severi lattice of $K3$ surfaces with 
non-symplectic automorphisms of order 4, 8 or 16 which act trivially on $S_{X}$. 
Section \ref{pre} is a preliminary section.
We recall some basic results about non-symplectic automorphisms on $K3$ surfaces.
Section \ref{4} is the main part of this paper.
Here, we classify non-symplectic automorphisms of order 4.
By using the Lefschetz formula and the classification of non-symplectic involutions,
we study fixed locus of non-symplectic automorphisms of order 4.
In Section \ref{8} and Section \ref{16}, we treat non-symplectic automorphisms of order 8 and 16, respectively.
In Section \ref{example}, we collect examples of $K3$ surfaces with a non-symplectic automorphism of 2-power order.

\begin{ack}
The author would like to express his gratitude to Professor Shigeyuki Kondo for 
giving him many useful comments and informing Example \ref{u2d4d4}.
He also thanks Professors Hisanori Ohashi, Alessandra Sarti and Matthias Sch\"{u}tt 
for pointing out some mistakes and valuable advices.
He is grateful to Professor JongHae Keum for warm encouragement.
His research is supported by Basic Science Research Program through the National Research Foundation(NRF)
of Korea funded by the Ministry of education, Science and Technology (2007-C00002).
And He would like to thank to the referee for pointing
out some mistakes and useful comments.
\end{ack}

\section{The N\'{e}ron-Severi and $p$-elementary lattices}\label{class}

A lattice $L$ is a free abelian group of finite rank $r$ equipped with 
a non-degenerate symmetric bilinear form, which will be denoted by $\langle \ , \ \rangle $.
The bilinear form $\langle \ , \ \rangle $ determines a 
canonical embedding $L\subset L^{\ast }=\Hom (L,\mathbb{Z})$. 
We denote by $A_{L}$ the factor group $L^{\ast }/L$ which is a finite abelian group.
$L(m)$ is the lattice whose bilinear form is the one on $L$ multiplied by $m$. 

We denote by $U$ the hyperbolic lattice defined by $\begin{pmatrix} 0 & 1 \\ 1 & 0 \end{pmatrix}$ 
which is an even unimodular lattice of signature $(1,1)$, and by $A_{m}$, $D_{n} $ or $E_{l}$ an even 
negative definite lattice associated with the Dynkin diagram of type $A_{m}$, $D_{n} $ or $E_{l}$ 
($m\geq 1$,  $n\geq 4$ and $l=6,7,8$). 

Let $p$ be a prime number.
A lattice $L$ is called \textit{$p$-elementary} if $A_{L}\simeq (\mathbb{Z}/p\mathbb{Z})^{\oplus a}$,
where $a$ is the minimal number of generators of $A_{L}$.
For a $p$-elementary lattice we always have the inequality $a\leq r$, since 
$\mid L^{\ast }/L\mid =p^{a}$, $\mid L^{\ast }/pL^{\ast }\mid =p^{r}$ 
and $pL^{\ast }\subset L\subset L^{\ast }$. 

\begin{ex}
For all $p$, lattices $E_{8}$, $E_{8}(p)$, $U$ and $U(p)$ are $p$-elementary.
$A_{1}$, $D_{4}$, $D_{8}$ and $E_{7}$ are 2-elementary.
\end{ex}

\begin{df}
For a 2-elementary lattice $L$, we put
\begin{equation*}
\delta _{L}=
\begin{cases}
0 & \text{if} \ x^{2}\in \mathbb{Z},  \forall x\in L^{\ast }, \\
1 & \text{otherwise}.
\end{cases}
\end{equation*}
\end{df}

Even indefinite 2-elementary lattices were classified by \cite[Theorem 3.6.2]{Ni1}.
\begin{thm}\label{cla-2el}
An even indefinite 2-elementary lattice $L$ is determined by 
the invariants $(\delta _{L}, t_{+},t_{-}, a )$ where the pair $(t_{+},t_{-})$ is the signature of $L$.
\end{thm}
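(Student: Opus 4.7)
The plan is to split the claim (which is \emph{uniqueness}, not existence, of the lattice with given invariants) into two independent pieces. First, Nikulin's general uniqueness theorem for even indefinite lattices asserts that such an $L$ is determined up to isomorphism by its signature $(t_+,t_-)$ together with its discriminant quadratic form $q_L:A_L\to\mathbb{Q}/2\mathbb{Z}$, provided $\ell(A_L)+2\leq \rk L$, where $\ell(A_L)$ is the minimal number of generators of $A_L$ (equal to $a$ here). Second, I claim that a finite quadratic form on the 2-elementary group $(\mathbb{Z}/2\mathbb{Z})^{a}$ is determined up to isomorphism by the triple $(\delta_L, a, (t_+-t_-)\bmod 8)$. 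Combining the two gives the theorem in the ``generic'' range $a\leq \rk L-2$, and the borderline ranks $a\in\{\rk L-1, \rk L\}$ are treated by a direct finite enumeration.

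For the classification of the finite form, decompose $q_L$ as an orthogonal sum of indecomposable 2-elementary blocks: $u(2)$ and $v(2)$ (rank $2$, values in $\mathbb{Z}/2\mathbb{Z}$, Brown invariants $0$ and $4$), and $w_{\pm 1}(2)$ (rank $1$, values $\pm 1/2$, Brown invariants $\pm 1$). When $\delta_L=0$ no $w$-summand occurs, and the single relation $v(2)^{\oplus 2}\cong u(2)^{\oplus 2}$ brings $q_L$ to either $u(2)^{\oplus k}$ or $u(2)^{\oplus k-1}\oplus v(2)$ with $a=2k$; these cases are distinguished by the Brown invariant being $0$ or $4\pmod 8$. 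When $\delta_L=1$, the absorption identities $u(2)\oplus w_{\pm 1}(2)\cong w_{\pm 1}(2)^{\oplus 2}\oplus w_{\mp 1}(2)$ and $v(2)\oplus w_{\pm 1}(2)\cong w_{\mp 1}(2)^{\oplus 3}$ eliminate every $u(2)$ and $v(2)$ summand, leaving $q_L\cong\bigoplus_{i=1}^{a}w_{\varepsilon_i}(2)$, whose isomorphism class depends only on $a$ and on the signed count of $\varepsilon_i$, i.e.\ on the Brown invariant modulo $8$. By Milgram's formula this invariant equals $(t_+-t_-)\bmod 8$.

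To conclude, given two even indefinite 2-elementary lattices $L,L'$ with identical invariants $(\delta, t_+, t_-, a)$, the step above yields $q_L\cong q_{L'}$, and Nikulin's uniqueness then forces $L\cong L'$ whenever $a\leq \rk L-2$. In the remaining extremal cases $a=\rk L$ and $a=\rk L-1$, $L$ is an overlattice of index at most $2$ of a sum of scaled unimodular pieces such as $U(2)^{\oplus s}\oplus E_8(2)^{\oplus t}$ (or with one $U$ replaced by $U(2)$), and a finite check shows that $(\delta_L, t_+, t_-, a)$ still separates the isomorphism classes.

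The main obstacle is the classification of the 2-elementary quadratic forms: setting up the complete list of Witt-style relations among $u(2), v(2), w_{\pm 1}(2)$ and verifying that the Brown invariant (computed via Gauss sums) together with $(\delta_L, a)$ exhausts the invariants is the technical core. The borderline ranks where Nikulin's uniqueness does not directly apply demand a separate, though elementary, enumeration.
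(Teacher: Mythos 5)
The paper offers no proof of this statement: it is quoted directly from Nikulin's classification, \cite[Theorem 3.6.2]{Ni1}, so your argument can only be compared with the standard proof, which your outline essentially reproduces. The main line is correct and sound: uniqueness of an even indefinite lattice with prescribed signature and discriminant quadratic form in the range $a+2\leq \rk L$, combined with the classification of nondegenerate quadratic forms on $(\mathbb{Z}/2\mathbb{Z})^{a}$ by the triple $(a,\delta_L,\ \mathrm{signature}\bmod 8)$. The generators $u(2)$, $v(2)$, $w_{\pm1}(2)$, the relations you list, the identity $v(2)^{\oplus 2}\cong u(2)^{\oplus 2}$ and its consequence $w_{+1}(2)^{\oplus 4}\cong w_{-1}(2)^{\oplus 4}$, and Milgram's formula are all as you say (Wall; Nikulin, Propositions 1.8.1--1.8.2), and the identification of $\delta_L=0$ with the absence of $w$-summands is immediate from the definition of $\delta_L$.

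The genuine gap is your treatment of the boundary cases $a=\rk L-1$ and $a=\rk L$, which is exactly where the uniqueness theorem you invoke is unavailable and where the real content of the statement sits; such cases do occur among the lattices the paper needs (e.g.\ $U(2)$ and $U(2)\oplus E_8(2)$ in Table \ref{table-2el} both have $a=\rk$). ``A finite check'' is not accurate: for each admissible $(\delta,t_+,t_-,a)$ one must prove that \emph{every} lattice with those invariants is isomorphic to the model, and the family of such tuples is infinite. Moreover your structural claim that $L$ is an index-$\leq 2$ overlattice of $U(2)^{\oplus s}\oplus E_8(2)^{\oplus t}$ is unproved and is not the right description when $a=\rk L-1$. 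A correct repair: if $a=\rk L$ then $L=2L^{\ast}$, so $L\cong M(2)$ for an indefinite unimodular lattice $M$ (even precisely when $\delta_L=0$), and the classification of indefinite unimodular lattices by signature and parity settles this case; if $a=\rk L-1$, pass to $N=L^{\ast}(2)$, an indefinite $2$-elementary lattice with $\ell(A_N)=1$ and $L\cong N^{\ast}(2)$, and apply uniqueness to $N$, taking care of the subcase where $N$ is odd. Either carry out this duality argument or, as the paper does, simply cite \cite[Theorem 3.6.2]{Ni1}.
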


By the Theorem, we can get the N\'{e}ron-Severi lattice of $K3$ surfaces with 
a non-symplectic automorphism of order $2^{k}$ acting trivially on $S_{X}$. 
See Table \cite[Table 1]{Ni3}.

If $k\geq 2$ then $\Phi(2^{k})$ is even.
Since $\Phi(2^{k})$ divides $\rk T_{X}$, $\rk T_{X}$ is even.
Hence if $X$ has a non-symplectic automorphisms of 2-power order  
then $\rk S_{X}$ is even.
Moreover we have the following.

\begin{prop}\label{delta}
Let $L$ be a 2-elementary lattice.
If $\delta _{L}=1$ then $L$ has no non-trivial isometries $f$ of order 4
which act trivially on $A_{L}$ and do not have eigenvalues 1 or $-1$.
\end{prop}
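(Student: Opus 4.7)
The plan is to derive a contradiction. Assuming such an $f$ exists, I will show that for each $x\in L^{\ast}$ the vector $(1-f)(x)$ lies in $L$ and has norm $2\langle x,x\rangle$, which by evenness of $L$ forces $\langle x,x\rangle\in\mathbb{Z}$ and hence $\delta_L=0$.

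First I would pin down the minimal polynomial of $f$. Since $f$ has order $4$ and no eigenvalue $\pm 1$, its eigenvalues on $L\otimes\mathbb{C}$ are the primitive fourth roots of unity $\pm i$, so the minimal polynomial of $f$ divides $x^{2}+1$; thus $f^{2}=-\mathrm{id}$ on $L$.

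Next, fix $x\in L^{\ast}$ and set $y:=(1-f)(x)$. Because $f$ acts trivially on $A_L=L^{\ast}/L$, we have $f(x)\equiv x\pmod{L}$, so $y\in L$. Expanding and using that $f$ preserves the form,
\[ \langle y,y\rangle = 2\langle x,x\rangle - 2\langle x,f(x)\rangle. \]
The cross term vanishes: by the isometry property together with $f^{2}=-\mathrm{id}$,
\[ \langle x,f(x)\rangle = \langle f(x),f^{2}(x)\rangle = -\langle f(x),x\rangle = -\langle x,f(x)\rangle, \]
so $\langle x,f(x)\rangle=0$. Hence $\langle y,y\rangle=2\langle x,x\rangle\in 2\mathbb{Z}$ by evenness of $L$, whence $\langle x,x\rangle\in\mathbb{Z}$ for every $x\in L^{\ast}$, contradicting $\delta_L=1$.

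The only step with any content is the initial reduction to $f^{2}=-\mathrm{id}$; once that identity is in place, the two short computations above yield the contradiction. Equivalently, the whole argument is the observation that $(1-f)^{\ast}(1-f)=(1+f)(1-f)=1-f^{2}=2\cdot\mathrm{id}$, so $(1-f)$ sends $L^{\ast}$ into $L$ while doubling the norm.
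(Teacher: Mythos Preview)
Your proof is correct and follows essentially the same line as the paper's: both arguments reduce to $f^{2}=-\mathrm{id}$, deduce $\langle x,f(x)\rangle=0$, and use that $f(x)-x\in L$ to force $\langle x,x\rangle\in\mathbb{Z}$. The only cosmetic difference is the last step: the paper writes $\langle x,x\rangle=-\langle l,x\rangle\in\mathbb{Z}$ directly from $l:=f(x)-x\in L$ and $x\in L^{\ast}$, whereas you compute $\langle y,y\rangle=2\langle x,x\rangle$ for $y=(1-f)(x)\in L$ and invoke evenness of $L$; the paper's version avoids the appeal to evenness, but in this context (even lattices throughout) the distinction is immaterial.
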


\begin{proof}
Let $f:L\rightarrow L$ be an isometry of order 4 which acts trivially on $A_{L}$
and does not have eigenvalues 1 or $-1$. 
Since the induced isometry $A_{L}\rightarrow A_{L} \ (\bar{x}\mapsto \overline{f^{\ast }(x)})$ is identity,  
for all $x\in L^{\ast }$, there exists an $l \in L$ such that $f^{\ast }(x)=x+l$.

By the assumption, we have $f^{\ast }+f^{\ast 3}=0$.
This implies 
$0= \langle f^{\ast}(x)+f^{\ast 3}(x), x \rangle
= \langle f^{\ast }(x), x \rangle + \langle f^{\ast 3}(x), x \rangle 
=2 \langle f^{\ast }(x), x \rangle 
=2 (\langle x, x \rangle + \langle l, x \rangle )$.
Thus we have $\langle x, x \rangle = -\langle l, x \rangle \in \mathbb{Z}$.
Hence $\delta _{L}=0$.
\end{proof}

The following tables are lists of even 2-elementary lattices with 
an isometry of order 2 and $\delta=0$. 
Hence if $X$ has a non-symplectic automorphisms of order 4, 8 or 16 which act trivially on $S_{X}$ 
then $S_{X}$ is one of the lattices in the following table.
(See also Lemma \ref{sayou} (1).)

\begin{longtable}{|c|c|c|c|}
\hline
$\rk S_{X}$ & $a$ & $S_{X}$ & $T_{X}$ \\
\hline
2 & 0 & $U$ & $U^{\oplus 2}\oplus E_{8}^{\oplus 2}$ \\
\hline
2 & 2 & $U(2)$ & $U\oplus U(2)\oplus E_{8}^{\oplus 2}$ \\
\hline
6 & 2 & $U\oplus D_{4}$ & $U^{\oplus 2}\oplus E_{8}\oplus D_{4}$ \\
\hline 
6 & 4 & $U(2)\oplus D_{4}$ & $U(2)^{\oplus 2}\oplus E_{8}\oplus D_{4}$ \\
\hline 
10 & 0 & $U\oplus E_{8}$ & $U^{\oplus 2}\oplus E_{8}$  \\
\hline
10 & 2 & $U\oplus D_{8}$ & $U^{\oplus 2}\oplus D_{8}$  \\
\hline
10 & 4 & $U\oplus D_{4}^{\oplus 2}$ & $U^{\oplus 2}\oplus D_{4}^{\oplus 2}$  \\
\hline
10 & 6 & $U(2)\oplus D_{4}^{\oplus 2}$ & $U\oplus U(2)\oplus D_{4}^{\oplus 2}$ \\
\hline
10 & 8 & $U\oplus E_{8}(2)$ & $U^{\oplus 2}\oplus E_{8}(2)$ \\
\hline
10 & 10 & $U(2)\oplus E_{8}(2)$ & $U\oplus U(2)\oplus E_{8}(2)$  \\
\hline
14 & 2 & $U\oplus E_{8}\oplus D_{4}$ & $U^{\oplus 2}\oplus D_{4}$ \\
\hline
14 & 4 & $U\oplus D_{8}\oplus D_{4}$ & $U\oplus U(2)\oplus D_{4}$ \\
\hline
14 & 6 & $U\oplus D_{4}^{\oplus 3}$ & $U(2)^{\oplus 2}\oplus D_{4}$  \\
\hline
18 & 0 & $U\oplus E_{8}^{\oplus 2}$ & $U^{\oplus 2}$ \\
\hline
18 & 2 & $U\oplus E_{8}\oplus D_{8}$ & $U\oplus U(2)$ \\
\hline
18 & 4 & $U\oplus D_{8}^{\oplus 2}$ & $U(2)^{\oplus 2}$  \\
\hline
\caption[1]{2-elementary lattices}\label{table-2el}
\end{longtable}

\begin{rem}\label{elpss}
Let $ \{ e, f \}$ be a basis of $U$ (resp. $U(2)$) with $\langle e, e \rangle = \langle f, f \rangle =0$ 
and $\langle e, f \rangle =1$ (resp. $\langle e, f \rangle =2$ ) .
If necessary replacing $e$ by $\varphi (e)$, 
where $\varphi $ is a composition of reflections induced from non-singular 
rational curves on $X$, we may assume that $e$ is represented by the class of an elliptic curve $F$ 
and the linear system $|F|$ defines an elliptic fibration $\pi :X\rightarrow \mathbb{P}^{1}$.
Note that $\pi $ has a section $f-e$ in case $U$. 
In case $U(2)$, there are no $(-2)$-vectors $r$ with $\langle r, e \rangle=1$, and hence $\pi$ has no sections.
\end{rem}

It follows from Remark \ref{elpss} and Table \ref{table-2el} that 
$X$ has an elliptic fibration $\pi :X\rightarrow \mathbb{P}^{1}$.
In the following, we fix such an elliptic fibration.

The following lemma follows from \cite[$\S 3$ Corollary 3]{PSS} and the 
classification of singular fibers of elliptic fibrations \cite{Kodaira}. 
\begin{lem}\label{elliptic}
Assume that $S_{X}=U(m)\oplus K_{1}\oplus \dots \oplus K_{r}$, 
where $m=$ 1 or 2, and $K_{i}$ is a lattice isomorphic to $A_{m}$, $D_{n} $ or $E_{l}$.
Then $\pi$ has a reducible singular fiber with corresponding Dynkin diagram $K_{i}$.
\end{lem}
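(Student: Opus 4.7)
The plan is to realize each summand $K_i$ as the root lattice of the components of a reducible singular fiber of $\pi$, via the standard theory of elliptic fibrations on $K3$ surfaces.

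\textbf{Step 1 (Roots lie in fibers).} Since each $K_i$ is an indecomposable $ADE$ root lattice, it is generated by its $(-2)$-vectors. Every such vector $\alpha\in K_i$ lies in the orthogonal complement of $U(m)$ in $S_X$; in particular $\alpha\cdot F=0$, where $F=e$ is the class of a fiber of $\pi$ (cf. Remark \ref{elpss}). By Riemann--Roch on $X$, either $\alpha$ or $-\alpha$ is effective, and since $\alpha\cdot F=0$ its effective representative is supported on fibers of $\pi$. This is precisely the content of \cite[\S3, Corollary 3]{PSS}.

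\textbf{Step 2 (Simple roots as $(-2)$-curves).} After choosing a set of simple roots of $\bigoplus K_i$ inside the $K3$-chamber (equivalently, after a sequence of reflections in $(-2)$-curves as in Remark \ref{elpss}), each simple root is represented by a nonsingular rational curve $C_\alpha$ on $X$ with $C_\alpha\cdot F=0$, hence by an irreducible component of some fiber of $\pi$.

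\textbf{Step 3 (Matching with Kodaira's list).} By Kodaira's classification \cite{Kodaira}, the components of a reducible singular fiber of $\pi$ form an extended (affine) Dynkin diagram of type $\tilde A_n$, $\tilde D_n$ or $\tilde E_l$, and removing one component yields the corresponding finite root lattice $A_n$, $D_n$ or $E_l$. Since each $K_i$ is connected and indecomposable, all simple roots of $K_i$ must come from the components of a single reducible fiber $F_i$. Comparing ranks shows that the sublattice generated by the components of $F_i$ orthogonal to a fixed component has rank equal to $\rk K_i$, and therefore agrees with $K_i$. In particular, the Dynkin diagram of $F_i$ is the one associated to $K_i$.

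\textbf{Main obstacle.} The subtle case is $m=2$, where $\pi$ has no section (Remark \ref{elpss}) and statements that are usually phrased for Jacobian fibrations, such as Shioda--Tate, are not directly available. I expect to handle this either by passing to the associated Jacobian fibration, which has the same singular fiber types as $\pi$, or by invoking \cite[\S3, Corollary 3]{PSS} directly, since that statement only requires a class orthogonal to the fiber and not the existence of a section. A second, smaller point to check is that the chamber choice in Step 2 can be made simultaneously for all summands $K_i$; this follows because the $K_i$ are mutually orthogonal, so their Weyl groups act independently on the corresponding positive cones.
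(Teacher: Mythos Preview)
Your plan is exactly the route the paper takes: its entire proof is the one-line remark that the lemma ``follows from \cite[\S 3, Corollary~3]{PSS} and the classification of singular fibers of elliptic fibrations \cite{Kodaira}.'' Your Steps~1--3 are a faithful unpacking of that citation, and your handling of the $m=2$ case via \cite{PSS} (which does not require a section) is the right call.

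One point in Step~3 deserves tightening. The phrase ``comparing ranks'' is not sufficient \emph{fiber by fiber}: a priori a single reducible fiber $F_j$ could carry a root lattice $R_j$ strictly larger than the $K_i$ it contains, or two distinct $K_i$'s could land in the same fiber. The clean fix is global rather than local: one checks that $F^{\perp}/\mathbb{Z}F \cong \bigoplus_i K_i$ (in both the $U$ and $U(2)$ cases, since $e^{\perp}=\mathbb{Z}e\oplus\bigoplus_i K_i$), while Kodaira's list shows that the root sublattice of $F^{\perp}/\mathbb{Z}F$ is exactly $\bigoplus_j R_j$, one $ADE$ summand per reducible fiber. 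Since $\bigoplus_i K_i$ is already a root lattice, the two direct sums coincide, and matching indecomposable summands gives $K_i\cong R_{\sigma(i)}$ for some bijection $\sigma$. This closes the gap and simultaneously disposes of the $m=2$ obstacle you flagged.
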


\section{Preliminaries}\label{pre}

\begin{lem}\label{sayou}
Let $\varphi $ be a non-symplectic automorphism of 2-power order on $X$.
Then we have :
\begin{itemize}
\item[(1)] $\varphi ^{\ast }\mid T_{X}\otimes \mathbb{C}$ can be diagonalized as:
\[ \begin{pmatrix} 
\zeta I_{q} & 0 & \cdots & \cdots & \cdots & 0 \\ 
0 & \zeta^{3} I_{q} &  &  &  & \vdots \\ 
\vdots &  & \ddots &  &  & \vdots \\ 
\vdots &  &  & \zeta^{n} I_{q} &  & \vdots \\ 
\vdots &  &  &  & \ddots  & 0 \\ 
0 & \cdots & \cdots & \cdots & 0 & \zeta^{2k-1} I_{q} \\ 
\end{pmatrix}, \]
where $I_{q}$ is the identity matrix of size $q$, $\zeta$ is a primitive $2^{k}$-th root of unity, 
$n$ is a odd number.
\item[(2)] Let $P$ be an isolated fixed point of $\varphi $ on $X$. Then 
$\varphi ^{\ast }$ can be written as 
\[ \begin{pmatrix}  \zeta ^{i} & 0 \\ 0 & \zeta ^{j}  \end{pmatrix}  \hspace{5mm} (i+j\equiv 1 \mod 2^{k}) \]
under some appropriate local coordinates around $P$.
\item[(3)] Let $C$ be an irreducible curve in $X^{\varphi }$ and $Q$ a point on $C$. 
Then $\varphi ^{\ast }$ can be written as
\[ \begin{pmatrix}  1 & 0 \\ 0 & \zeta   \end{pmatrix} \] 
under some appropriate local coordinates around $Q$. 
In particular, fixed curves are non-singular.
\end{itemize}
\end{lem}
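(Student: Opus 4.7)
For part (1), the strategy exploits the minimality characterization of $T_X$: any primitive sublattice $L\subset H^2(X,\mathbb{Z})$ whose complexification contains $\omega_X$ must contain $T_X$, since $L^{\perp}$ is then orthogonal to $\omega_X$ and hence lies in $S_X$. Because $\varphi^{\ast}\omega_X=\zeta\omega_X$ with $\zeta$ a primitive $2^k$-th root of unity, $\omega_X$ already lives in the complexification of the $\mathbb{Q}$-subspace $V:=\ker\bigl(\Phi_{2^k}(\varphi^{\ast})\bigr)\subset T_X\otimes\mathbb{Q}$, where $\Phi_{2^k}$ denotes the $2^k$-th cyclotomic polynomial. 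The primitive hull of $V\cap T_X$ is then a primitive sublattice whose complexification contains $\omega_X$, so minimality forces $V=T_X\otimes\mathbb{Q}$. Consequently the minimal polynomial of $\varphi^{\ast}|_{T_X}$ is $\Phi_{2^k}$, making $T_X\otimes\mathbb{Q}$ a $\mathbb{Q}(\zeta)$-vector space of some dimension $q$. Base-changing to $\mathbb{C}$ splits it into the $\Phi(2^k)=2^{k-1}$ Galois-conjugate eigenlines, each of dimension $q$, which is precisely the diagonal form claimed.

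For (2), the plan is to apply the classical linearization lemma for finite-order holomorphic transformations near a fixed point: there exist local analytic coordinates around $P$ in which $\varphi$ is linear, and having finite order it is simultaneously diagonalizable over $\mathbb{C}$ as $\mathrm{diag}(\zeta^i,\zeta^j)$. Locally $\omega_X=g(x,y)\,dx\wedge dy$ with $g$ nowhere vanishing, so the identity $\varphi^{\ast}\omega_X=\zeta\omega_X$ forces the determinant $\zeta^{i+j}$ to equal $\zeta$, giving $i+j\equiv 1\pmod{2^k}$.

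For (3), a point $Q$ on an irreducible fixed curve $C$ is again a fixed point and the same linearization applies. Since $C$ is pointwise fixed, its analytic tangent direction at $Q$ lies in the $1$-eigenspace of $\varphi^{\ast}$, so one eigenvalue must be $1$; the determinant relation from (2) then forces the other to be $\zeta$. In those coordinates the analytic fixed locus is the coordinate axis $\{y=0\}$, so $C$ is automatically non-singular at $Q$.

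The only genuinely subtle point is the equal-multiplicity assertion in (1): from the existing observation that $\varphi^{\ast}$ has no non-zero fixed vectors on $T_X\otimes\mathbb{Q}$, one only excludes $1$ as an eigenvalue, leaving lower-order roots of unity such as $\zeta^2$ possible in principle. It is the minimality characterization of $T_X$ that upgrades this to a full cyclotomic action, guaranteeing the symmetric appearance of every primitive $2^k$-th root of unity with common multiplicity $q=\rk T_X/2^{k-1}$.
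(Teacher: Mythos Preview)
Your proof is correct and follows essentially the same approach as the paper. The paper simply cites Nikulin \cite[Theorem 3.1]{Ni2} for part (1) and gives a one-line determinant argument for (2)--(3); you have unpacked the Nikulin citation by spelling out the minimality-of-$T_X$ argument and made the local linearization and determinant reasoning for (2)--(3) explicit, but the underlying ideas are identical.
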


\begin{proof}
(1) This follows form \cite[Theorem 3.1]{Ni2}. 

(2), (3) 
Since $\varphi ^{\ast }$ acts on $H^{0}(X, \Omega _{X}^{2})$ as a multiplication by $\zeta $, 
it acts on the tangent space of a fixed point as 
\[ \begin{pmatrix} 1 & 0 \\ 0 & \zeta  \end{pmatrix} \hspace{1cm} \text{or}  \hspace{1cm} 
\begin{pmatrix} \zeta ^{i} & 0 \\ 0 & \zeta ^{j}  \end{pmatrix} \]
where $i+j\equiv 1 \pmod {2^{k}}$.
\end{proof}
Thus the fixed locus of $\varphi $ consists of a disjoint union of non-singular curves and isolated points. 
Hence we can express the irreducible decomposition of $X^{\varphi }$ as 
\[ X^{\varphi } =\{ P_{1}, \dots , P_{M} \} \amalg C_{1} \amalg \dots \amalg C_{N}, \] 
where $P_{j}$ is an isolated point and $C_{k}$ is a non-singular curve.

In the following, we assume that $k\geq 2$.
Hence we treat non-symplectic automorphisms of order 4, 8 and 16.

\begin{lem}\label{top-euler}
Let $r$ be the Picard number of $X$ and  
$\varphi $ a non-symplectic automorphism of 
2-power order which acts trivially on $S_{X}$.
Then $\chi (X^{\varphi })=r+2$.
\end{lem}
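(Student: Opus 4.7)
The plan is to use the topological Lefschetz fixed point formula
\[
\chi(X^{\varphi}) \;=\; \sum_{i=0}^{4} (-1)^{i}\,\mathrm{tr}\!\left(\varphi^{\ast}\mid H^{i}(X,\mathbb{C})\right),
\]
and exploit the special Hodge structure of a $K3$ surface together with the very restrictive form of $\varphi^{\ast}\mid T_{X}\otimes\mathbb{C}$ coming from Lemma \ref{sayou}(1).

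First I would reduce the right-hand side. Because $X$ is a $K3$ surface, $b_{1}=b_{3}=0$ and $b_{0}=b_{4}=1$ with $\varphi^{\ast}$ acting trivially on $H^{0}$ and $H^{4}$. Hence the formula collapses to
\[
\chi(X^{\varphi}) \;=\; 2 \,+\, \mathrm{tr}\!\left(\varphi^{\ast}\mid H^{2}(X,\mathbb{C})\right).
\]
Using the $\varphi^{\ast}$-invariant orthogonal decomposition $H^{2}(X,\mathbb{C}) = (S_{X}\otimes\mathbb{C})\oplus (T_{X}\otimes\mathbb{C})$ and the hypothesis that $\varphi$ acts trivially on $S_{X}$, the $S_{X}$-summand contributes exactly $r$. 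So everything reduces to showing that the trace on $T_{X}\otimes\mathbb{C}$ is zero.

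The next step is to compute that trace using Lemma \ref{sayou}(1). The eigenvalues of $\varphi^{\ast}$ on $T_{X}\otimes\mathbb{C}$ are exactly the primitive $2^{k}$-th roots of unity $\zeta^{2j-1}$ for $j=1,\dots,2^{k-1}$, each occurring with the same multiplicity $q$. The sum of these roots is the negative of the coefficient of $x^{2^{k-1}-1}$ in the cyclotomic polynomial $\Phi_{2^{k}}(x)=x^{2^{k-1}}+1$ (valid since $k\geq 2$), which is $0$. Therefore
\[
\mathrm{tr}\!\left(\varphi^{\ast}\mid T_{X}\otimes\mathbb{C}\right) \;=\; q\,\sum_{j=1}^{2^{k-1}}\zeta^{2j-1} \;=\; 0 .
\]
Combining these contributions yields $\chi(X^{\varphi})=2+r+0=r+2$.

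There is no real obstacle here: the statement is a direct consequence of Lefschetz combined with the diagonal form in Lemma \ref{sayou}(1). The only thing to be careful about is invoking the uniform multiplicity $q$ across all primitive $2^{k}$-th roots, which is precisely what Lemma \ref{sayou}(1) supplies (and which in turn reflects the fact that $\varphi^{\ast}\mid T_{X}$ is defined over $\mathbb{Q}$ and has no fixed vectors, so its characteristic polynomial is a power of $\Phi_{2^{k}}$).
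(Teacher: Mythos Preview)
Your proposal is correct and follows essentially the same argument as the paper: apply the topological Lefschetz formula, use $b_{1}=b_{3}=0$ and the decomposition $H^{2}=S_{X}\oplus T_{X}$, read off the $S_{X}$-contribution $r$ from the triviality hypothesis, and invoke Lemma~\ref{sayou}(1) together with $k\geq 2$ to see that the trace on $T_{X}$ vanishes. The only cosmetic difference is that the paper computes the vanishing via $\sum_{j}\zeta^{2j-1}=-\sum_{j}\zeta^{2j}=0$, whereas you phrase it through the coefficients of $\Phi_{2^{k}}(x)=x^{2^{k-1}}+1$; these are the same observation.
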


\begin{proof}
We apply the topological Lefschetz formula:
\[ \chi (X^{\varphi }) =\sum _{i=0}^{4}(-1)^{i} \text{tr}(\varphi ^{\ast }|H^{i}(X, \mathbb{R})). \]

Since $\varphi ^{\ast }$ acts trivially on $S_{X}$, $\text{tr}(\varphi ^{\ast }|S_{X})=r $.
By Lemma \ref{sayou} (1), 
$\text{tr}(\varphi ^{\ast }|T_{X})= q(\zeta +\zeta^{3}+\dots + \zeta^{n}+\dots +\zeta^{2k-1})
=-q(1+\zeta^{2}+\dots +\zeta^{2k-2})=0$. 
Hence we can calculate the right -hand side of the Lefschetz formula as follows:
$\sum _{i=0}^{4}(-1)^{i} \text{tr}(\varphi ^{\ast }|H^{i}(X, \mathbb{R})) 
= 1-0+\text{tr}(\varphi ^{\ast }|S_{X})+\text{tr}(\varphi ^{\ast }|T_{X})-0+1
= r+2$.
\end{proof}

\section{Order 4}\label{4}

We shall study the fixed locus of non-symplectic automorphisms of order 4.
In this section, let $\varphi $ be a non-symplectic automorphism of order 4.

\begin{prop}\label{4-point}
Let $r$ be the Picard number of $X$. 
Then the number of isolated fixed points of $\varphi $, 
$M$ is $(r+6)/2$.
\end{prop}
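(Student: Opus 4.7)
The plan is to obtain two independent linear relations in $M$ and $S:=\sum_{k}(1-g(C_{k}))$ (the sum over the irreducible fixed curves $C_{k}$ of $\varphi$) and solve. The first relation comes for free from the topological Lefschetz formula in Lemma \ref{top-euler}: since $\chi(C_{k})=2(1-g(C_{k}))$,
\[
M+2S=\chi(X^{\varphi})=r+2.
\]

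For the second relation I would apply the Atiyah--Bott holomorphic Lefschetz formula to $\mathcal{O}_{X}$. The trace side equals $1+\bar{\zeta}=1-i$, with trivial action on $H^{0}(\mathcal{O}_{X})=\mathbb{C}$ and action by $\bar{\zeta}$ on $H^{2}(\mathcal{O}_{X})=\overline{H^{2,0}}$. For the local contributions one uses Lemma \ref{sayou}: by part (2) and the congruence $i+j\equiv 1\pmod{4}$ with both eigenvalues $\neq 1$, the only admissible tangent eigenvalues at an isolated fixed point are $(\zeta^{2},\zeta^{3})=(-1,-i)$, giving $\frac{1}{2(1+i)}=\frac{1-i}{4}$ per point; and by part (3) the normal eigenvalue along each fixed curve is $\zeta=i$. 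Feeding the $K3$ adjunction identity $C_{k}^{2}=2g(C_{k})-2$ into the Atiyah--Bott curve formula gives
\[
(1-g(C_{k}))\cdot\frac{1+\lambda}{(1-\lambda)^{2}}=-\frac{(1-g(C_{k}))(1-i)}{2}
\]
per fixed curve. Summing and dividing by the nonzero factor $1-i$ yields
\[
M-2S=4.
\]

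Adding the two relations produces $2M=r+6$, and hence $M=(r+6)/2$ as required. As a byproduct, subtraction gives $S=(r-2)/4$, which will be useful in the sequel for pinning down the fixed-curve picture. The main subtlety is the Atiyah--Bott curve contribution: unlike the involution case, where $\lambda=-1$ makes the self-intersection correction vanish, here $\lambda=i$ and the $C_{k}^{2}/(1-\lambda)^{2}$ correction term is essential; similarly, the action on $H^{0,2}$ must be taken as the complex conjugate of the action on $\omega_{X}$, so that $L(\varphi,\mathcal{O}_{X})$ is computed as $1+\bar{\zeta}$ and not $1+\zeta$.
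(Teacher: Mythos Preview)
Your argument is correct and is essentially the same as the paper's: both compute the holomorphic Lefschetz number $L(\varphi)=1+\zeta^{3}=1-i$ via Serre duality, use Lemma \ref{sayou} to identify the unique local type $(\zeta^{2},\zeta^{3})$ at isolated points and the curve contribution $(1-g(C_{l}))\dfrac{1+\zeta}{(1-\zeta)^{2}}$ after substituting $C_{l}^{2}=2g(C_{l})-2$, and then combine with Lemma \ref{top-euler}. The paper phrases the outcome as $M=4+\sum_{l}(2-2g(C_{l}))$ and then plugs into $\chi(X^{\varphi})=r+2$, which is exactly your pair of equations $M-2S=4$ and $M+2S=r+2$.
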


\begin{proof}
First we calculate the holomorphic Lefschetz number $L(\varphi )$ in two ways as in 
\cite[page 542]{AS1} and \cite[page 567]{AS2}. That is 
\[ L(\varphi ) = \sum _{i=0}^{2} \text{tr}(\varphi ^{\ast }|H^{i}(X, \mathcal{O}_{X})), \]
\[ L(\varphi ) = \sum _{j=1}^{M} a(P_{j}) + \sum _{l=1}^{N}b(C_{l}).\]

Here 
\begin{align*}
a(P_{j}) : & =\frac{1}{\det (1-\varphi ^{\ast }|T_{P_{j}})}  \\
 & =\frac{1}{\det \left ( \begin{pmatrix} 1 & 0 \\ 0 & 1 \end{pmatrix} -  \begin{pmatrix} \zeta ^{2} & 0 \\ 0 & \zeta ^{3} \end{pmatrix} \right ) }, 
\end{align*}
\begin{align*}
b(C_{l}) : & =\frac{1-g(C_{l})}{1-\zeta }- \frac{\zeta C_{l}^{2}}{(1-\zeta )^{2}}, 
\end{align*}
where $T_{P_{j}}$ is the tangent space of $X$ at $P_{j}$, $g(C_{l})$ is the genus of $C_{l}$.

Using the Serre duality $H^{2}(X, \mathcal{O}_{X})\simeq H^{0}(X,\mathcal{O}_{X}(K_{X}))^{\vee }$, we calculate 
from the first formula that $L(\varphi )=1+\zeta ^{3}$. 
From the second formula, we obtain
\[
L(\varphi ) = \frac{M}{(1-\zeta ^{2})(1-\zeta ^{3})}
+ \sum_{l=1}^{N} \frac{(1+\zeta )(1-g(C_{l}))}{(1-\zeta )^{2}}. 
\]
Combing these two formulae, we have 
$M=4+\sum _{l=1}^{N}(2-2g(C_{l})$.
By $\chi (X^{\varphi })=M+\sum _{l=1}^{N}(2-2g(C_{l}))$ and 
Lemma \ref{top-euler}, we have $M=(r+6)/2$.
\end{proof}

\begin{prop}\label{ne-order4}
If $S_{X}=U\oplus E_{8}(2)$, $U(2)\oplus E_{8}(2)$, 
$U\oplus D_{4}^{\oplus 3}$ 
or $U\oplus D_{8}^{\oplus 2}$  then 
$X$ has no non-symplectic automorphisms of order 4 which act trivially on $S_{X}$.
\end{prop}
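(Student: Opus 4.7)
The plan is to work with the involution $\varphi^{2}$, which is non-symplectic and still acts trivially on $S_{X}$, so that Theorem \ref{cla-orderp} (with $p=2$) explicitly describes $X^{\varphi^{2}}$ in each of the four cases. Since $X^{\varphi}\subset X^{\varphi^{2}}$, the structure of $X^{\varphi}$ is controlled by how $\varphi$ acts on the connected components of $X^{\varphi^{2}}$, and this can be tested against the numerical constraints $M=(r+6)/2$ from Proposition \ref{4-point} and $\chi(X^{\varphi})=r+2$ from Lemma \ref{top-euler}. Since by convention $\varphi^{\ast}\omega_{X}=\zeta_{4}\omega_{X}$, we have $(\varphi^{2})^{\ast}\omega_{X}=-\omega_{X}$, so $\varphi^{2}$ is indeed a non-symplectic involution.

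The book-keeping I would set up is the following. Each smooth connected component $C$ of $X^{\varphi^{2}}$ is either $\varphi$-stable or paired by $\varphi$ with a different component. In the $\varphi$-stable case $\varphi|_{C}$ is an involution: if trivial, then $C$ is a curve component of $X^{\varphi}$; if nontrivial, then $C$ contributes $2$ isolated $\varphi$-fixed points when $C\cong\mathbb{P}^{1}$, and $0$ or $4$ when $C$ is an elliptic curve. Conversely, every point of $X^{\varphi}$ lies in $X^{\varphi^{2}}$. Let $a_{1}$, $b_{1}$, $c_{1}$ count respectively the $\varphi$-stable components on which $\varphi$ acts trivially, those on which it acts as a non-trivial involution, and the unordered swapped pairs; then $a_{1}+b_{1}+2c_{1}$ equals the total number of components of $X^{\varphi^{2}}$.

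Next I would run the four cases. For $S_{X}=U(2)\oplus E_{8}(2)$, Theorem \ref{cla-orderp} gives $X^{\varphi^{2}}=\emptyset$, hence $X^{\varphi}=\emptyset$, contradicting $M=8$. For $S_{X}=U\oplus E_{8}(2)$ we have $X^{\varphi^{2}}=C^{(1)}\amalg C^{(1)}$; enumerating the swap and each choice of $\varphi|_{C}$ on the stable components yields $(M,\chi(X^{\varphi}))\in\{(0,0),(0,4),(4,4),(4,8),(8,8)\}$, none of which equals the required $(8,12)$. For $S_{X}=U\oplus D_{4}^{\oplus 3}$, $X^{\varphi^{2}}=C^{(1)}\amalg E_{1}\amalg\cdots\amalg E_{4}$; $C^{(1)}$ is $\varphi$-stable as the unique non-rational component, and in each of the three sub-cases for $\varphi|_{C^{(1)}}$ (trivial; involution with $0$ fixed points; involution with $4$ fixed points), the requirements $\chi(X^{\varphi})=16$ and $M=10$ force $a_{1}+b_{1}\geq 6$ among the four rational components, violating $a_{1}+b_{1}+2c_{1}=4$. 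Finally, for $S_{X}=U\oplus D_{8}^{\oplus 2}$, $X^{\varphi^{2}}$ is a disjoint union of $8$ rational curves, and the equations $\chi(X^{\varphi})=20$, $M=12$ force $a_{1}=4$, $b_{1}=6$, whence $a_{1}+b_{1}+2c_{1}=8$ gives $c_{1}=-1$, a contradiction.

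The main obstacle is the $U\oplus D_{4}^{\oplus 3}$ case, where the presence of the elliptic component $C^{(1)}$ introduces a trichotomy for $\varphi|_{C^{(1)}}$ that must be analyzed separately; in every branch, however, the contribution of $C^{(1)}$ to $M$ and $\chi(X^{\varphi})$ is small enough that the balance on the four rational components remains incompatible with $a_{1}+b_{1}+2c_{1}=4$, which closes the argument.
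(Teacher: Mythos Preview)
Your argument is correct and in fact more uniform than the paper's. Both proofs share the same opening move for $S_{X}=U\oplus E_{8}(2)$ and $U(2)\oplus E_{8}(2)$: from $M=8$ and $\chi(X^{\varphi})=12$ one gets $\sum_{l}(2-2g(C_{l}))=4$, which forces a rational curve into $X^{\varphi}\subset X^{\varphi^{2}}$ and contradicts Theorem~\ref{cla-orderp}. (Your case-by-case listing of $(M,\chi)$ for $U\oplus E_{8}(2)$ contains two phantom values $(0,4)$ and $(4,8)$; since every component of $X^{\varphi^{2}}$ is elliptic, each contributes either $(0,0)$ or $(4,4)$, so the actual possibilities are $(0,0),(4,4),(8,8)$. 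This does not affect the conclusion.)

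The genuine divergence is in the last two cases. For $S_{X}=U\oplus D_{4}^{\oplus 3}$ the paper invokes the elliptic fibration: triviality on $S_{X}$ forces $\varphi$ to fix the base of $\pi$, the section, and the multiplicity-$2$ component of each of the three $\mathrm{I}_{0}^{\ast}$ fibers, hence at least four rational curves lie in $X^{\varphi}$; the Euler-characteristic bookkeeping then manufactures a curve of genus $\geq 2$ in $X^{\varphi}\subset X^{\varphi^{2}}$, a contradiction. For $S_{X}=U\oplus D_{8}^{\oplus 2}$ the paper simply cites Sch\"{u}tt. Your approach replaces both of these by the same elementary counting on the components of $X^{\varphi^{2}}$, using only that an involution of $\mathbb{P}^{1}$ has two fixed points and that an involution of an elliptic curve has zero or four. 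This is self-contained (no appeal to the fibration geometry or to \cite{Schutt}) and handles all four lattices by the same mechanism; the paper's route, on the other hand, illustrates how the explicit geometry of the singular fibers can be read off from $S_{X}$, which is a theme used elsewhere in the paper.
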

\begin{proof}
We will check the statement for each $S_{X}$ individually.

We assume $S_{X}=U\oplus E_{8}(2)$ or $U(2)\oplus E_{8}(2)$.
If $X$ has a non-symplectic automorphism $\varphi$ of order 4 which acts trivially on $S_{X}$
then $X^{\varphi }$ contains non-singular rational curves 
by Lemma \ref{top-euler} and the proof of Proposition \ref{4-point}.
Although these curves are fixed by $\varphi ^{2}$, it is a contradiction by 
Theorem \ref{cla-orderp}.
This settles Proposition \ref{ne-order4} in cases $S_{X}=U\oplus E_{8}(2)$ and $U(2)\oplus E_{8}(2)$.

We assume $S_{X}=U\oplus D_{4}^{\oplus 3}$ and $X$ has a non-symplectic 
automorphism $\varphi$ of order 4 which acts trivially on $S_{X}$.
Then $X^{\varphi ^{2}}=C^{(1)}\amalg E_{1}\amalg \cdots \amalg E_{4}$
by Theorem \ref{cla-orderp}.

Since $\varphi$ acts trivially on $S_{X}$, 
$\varphi$ preserves reducible singular fibers of an elliptic fibration $\pi$. 
Hence $\varphi $ acts trivially on the base 
of $\pi$ and the section (c.f. Remark \ref{elpss})
is fixed by $\varphi$.
By Lemma \ref{elliptic}, $\pi$
has three singular fibers of type I$_{0}^{\ast }$.
The component with multiplicity 2 is pointwise fixed by $\varphi$.
Hence $X^{\varphi }$ contains at least four non-singular rational curves.
 
On the other hand 
$\chi (C^{(g)} \amalg E_{1} \amalg \dots \amalg E_{N})=16-10=6$ 
by Lemma \ref{top-euler} and Proposition \ref{4-point}. 
Thus $X^{\varphi }$ contains a non-singular curve $C^{(g)}$ with $g\geq 2$.
But this is a contradiction because $X^{\varphi ^{2}}$ does not contain $C^{(g)}$ with $g\geq 2$.
This settles Proposition \ref{ne-order4} in cases $S_{X}=U\oplus D_{4}^{\oplus 3}$.

By \cite[Theorem 1]{Schutt}, 
$X$ with $S_{X}=U\oplus D_{8}^{\oplus 2}$ has no non-symplectic automorphisms 
of order 4 .
\end{proof}

In other cases of Table \ref{table-2el}, there exist $K3$ surfaces with a non-symplectic automorphism of order 4.
See Section \ref{example}.

In the following, we shall describe 
$X^{\varphi}= \{ P_{1},  \dots , P_{M} \} \amalg C^{(g)} \amalg E_{1} \amalg \dots \amalg E_{N}$.

\begin{prop}\label{m-order4}
Assume $S_{X}$ is 2-elementary and $\delta =0$.
If $S_{X} \neq U\oplus E_{8}(2)$, $U(2)\oplus E_{8}(2)$, $U\oplus D_{4}^{\oplus 3}$ or $U\oplus D_{8}^{\oplus 2}$
then  $X^{\varphi }$ has the form
\begin{equation*}
X^{\varphi }=
\begin{cases}
\{ P_{1}, P_{2}, \dots , P_{4} \} & \text{if $\rk S_{X}=2$,} \\
\{ P_{1}, P_{2}, \dots , P_{6} \} \amalg E_{1} & \text{if $\rk S_{X}=6$,} \\
\{ P_{1}, P_{2}, \dots , P_{8} \} \amalg E_{1} \amalg E_{2} & \text{if $\rk S_{X}=10$,}\\
\{ P_{1}, P_{2}, \dots , P_{10} \} \amalg E_{1} \amalg E_{2} \amalg E_{3} & \text{if $\rk S_{X}=14$,}\\
\{ P_{1}, P_{2}, \dots , P_{12} \} \amalg E_{1} \amalg E_{2} \amalg E_{3} \amalg E_{4} & \text{if $\rk S_{X}=18$.}
\end{cases}
\end{equation*}
\end{prop}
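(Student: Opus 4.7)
The plan is to combine the numerical identities from the Lefschetz-type formulas with the classification of $X^{\varphi^{2}}$ in Theorem \ref{cla-orderp}, and to rule out the possibility that the unique high-genus component of $X^{\varphi^{2}}$ is pointwise fixed by $\varphi$. First, Proposition \ref{4-point} gives $M = (r+6)/2$, and combining this with $\chi(X^{\varphi}) = r+2$ (Lemma \ref{top-euler}) and $\chi(X^{\varphi}) = M + \sum_{l}(2-2g(C_{l}))$ yields
\[
\sum_{l}(2-2g(C_{l})) = \frac{r-2}{2}.
\]

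Next, I would observe that every fixed curve of $\varphi$ is a connected component of $X^{\varphi^{2}}$: it is smooth by Lemma \ref{sayou}(3), contained in $X^{\varphi^{2}}$, and by Theorem \ref{cla-orderp} (case $p=2$, $\delta=0$) the set $X^{\varphi^{2}}$ is itself a disjoint union $C^{(g_{*})} \amalg E_{1} \amalg \cdots \amalg E_{N}$ of smooth curves, with $g_{*} = (22-r-a)/2$ and $N = (r-a)/2$. Since $\varphi^{2}$ is the identity on $X^{\varphi^{2}}$, the $\varphi$-orbits on its components have size $1$ or $2$; on a singleton component $\varphi$ acts either trivially (giving a fixed curve) or as a non-trivial involution (giving $2$ isolated fixed points on a $\mathbb{P}^{1}$, and $b_{C}$-many on $C^{(g_{*})}$ via Riemann-Hurwitz). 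Because $C^{(g_{*})}$ is the unique component of its genus, $\varphi$ fixes it setwise; let $\varepsilon \in \{0,1\}$ record whether $\varphi$ also fixes it pointwise, and let $n$, $m$ denote the numbers of $E_{i}$'s on which $\varphi$ acts as the identity and as a non-trivial involution, respectively.

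The main obstacle is excluding $\varepsilon = 1$. Assuming $\varepsilon = 1$, the $\chi$-identity (using $2-2g_{*} = r+a-20$) forces $n = (38-r-2a)/4$, and $M = 2m$ forces $m = (r+6)/4$, since every isolated fixed point must then come from an $E_{i}$ on which $\varphi$ acts as a non-trivial involution. The remaining $E_{i}$'s lie in $\varphi$-swapped pairs, so $n+m \leq N$; but substituting gives $(22-a)/2 \leq (r-a)/2$, i.e.\ $r \geq 22$, contradicting $r \leq 18$ in Table \ref{table-2el}. Hence $\varepsilon = 0$, and the $\chi$-identity collapses to $2n = (r-2)/2$, giving $n = (r-2)/4 \in \{0,1,2,3,4\}$ for $r \in \{2, 6, 10, 14, 18\}$. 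Together with $M = (r+6)/2$, this produces exactly the stated fixed-locus structure.
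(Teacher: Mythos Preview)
Your argument is correct and is in fact cleaner than the paper's own proof. Both proofs rest on the same three inputs---Lemma~\ref{top-euler}, Proposition~\ref{4-point}, and the structure of $X^{\varphi^{2}}$ from Theorem~\ref{cla-orderp}---and both reduce the problem to excluding the possibility that the unique high-genus curve $C^{(g_{*})}$ in $X^{\varphi^{2}}$ is pointwise fixed by $\varphi$. The difference is in how this exclusion is carried out. The paper proceeds lattice by lattice: for $S_{X}=U$ it observes that $\chi$ of the curve part vanishes, so fixing a rational curve would force a spurious genus-$2$ curve; for $S_{X}=U\oplus E_{8}\oplus D_{4}$ it instead counts that, if $C^{(3)}$ were fixed, all ten isolated points would have to sit on the single remaining rational component of $X^{\varphi^{2}}$, which can carry only two. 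The remaining cases are dismissed with ``similar arguments''. Your approach replaces this case analysis with a single bookkeeping inequality: setting $\varepsilon=1$ forces $n=(38-r-2a)/4$ and $m=(r+6)/4$, and then $n+m\le N=(r-a)/2$ gives $r\ge 22$, impossible. This handles every admissible pair $(r,a)$ in Table~\ref{table-2el} at once. The paper's ad hoc contradictions are special instances of yours: for example, in the $U\oplus E_{8}\oplus D_{4}$ case your inequality reads $5+5\le 6$, which is exactly the obstruction the paper finds by noting that ten isolated points cannot live on one $\mathbb{P}^{1}$. One small point worth making explicit in your write-up: the reason every isolated fixed point of $\varphi$ lies on a $\varphi$-invariant component of $X^{\varphi^{2}}$ is that the components are disjoint, so a fixed point cannot lie on a component that is swapped with a different one.
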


\begin{proof}
We will check the form of $X^{\varphi }$ for each $S_{X}$ individually.

Assume $S_{X}=U$.
By Theorem \ref{cla-orderp}, 
$X^{\varphi ^{2}}=C^{(10)}\amalg E_{1}$.
If $X^{\varphi }$ contains a non-singular rational curve $E_{2}$ 
or a non-singular curve $C^{(1)}$ 
then $E_{2}$ or $C^{(1)}$ are also contained $X^{\varphi ^{2}}$. 
This is a contradiction.
Thus $X^{\varphi }$ contains at most one non-singular rational curve 
and no non-singular curves with genus 1. 
Put $X^{\varphi}= \{ P_{1},  \dots , P_{M} \} \amalg C^{(g)} \amalg E_{1} \amalg \dots \amalg E_{N}$.
Then $\chi (C^{(g)} \amalg E_{1} \amalg \dots \amalg E_{N})=4-4=0$ 
by Lemma \ref{top-euler} and Proposition \ref{4-point}. 
If $X^{\varphi }$ contains $E_{1}$ then 
$X^{\varphi }$ contains a non-singular curve $C^{(2)}$.
But this is a contradiction because $X^{\varphi ^{2}}$ does not contain $C^{(2)}$.
Hence $X^{\varphi }=\{ P_{1}, P_{2}, \dots , P_{4} \}$.
This settles Proposition \ref{m-order4} in the case $S_{X}=U$.

Assume $S_{X}=U\oplus E_{8}\oplus D_{4}$.
Then $X^{\varphi ^{2}}=C^{(3)}\amalg E_{1}\amalg \cdots \amalg E_{6}$
by Theorem \ref{cla-orderp}.
We remark that 
$\chi (C^{(g)} \amalg E_{1} \amalg \dots \amalg E_{N})=16-10=6$ 
by Lemma \ref{top-euler} and Proposition \ref{4-point}. 
If $X^{\varphi}$ contains $C^{(3)}$ then 
$X^{\varphi}=\{ P_{1}, P_{2}, \dots , P_{10} \} \amalg C^{(3)}\amalg E_{1}\amalg \cdots \amalg E_{5}$.
Since $E_{6}$ is not fixed by $\varphi$, isolated fixed points $P_{i}$ lie on $E_{6}$.
But this is a contradiction because a non-singular rational curve has exactly two fixed points.
Hence $X^{\varphi }=\{ P_{1}, P_{2}, \dots , P_{10} \} \amalg E_{1} \amalg E_{2} \amalg E_{3}$.
This settles Proposition \ref{m-order4} in the case $S_{X}=U\oplus E_{8}\oplus D_{4}$.

In the other case we can check the claim  
by similar arguments. 
\end{proof}

\section{Order 8}\label{8}
In this section, let $\varphi $ be a non-symplectic automorphism of order 8.
And we shall describe 
$X^{\varphi}= \{ P_{1},  \dots , P_{M} \} \amalg C^{(g)} \amalg E_{1} \amalg \dots \amalg E_{N}$.

\begin{prop}\label{8-point}
Let $r$ be the Picard number of $X$. Then the number of isolated points $M$ is 
$(3r+6)/4$.
\end{prop}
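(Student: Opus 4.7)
The plan is to extend the method of Proposition \ref{4-point}: apply the holomorphic Lefschetz fixed point formula to $\varphi$ in two ways, and combine the result with the topological Lefschetz formula (Lemma \ref{top-euler}) and Proposition \ref{4-point} applied to $\varphi^2$. By Serre duality and $\varphi^*\omega_X = \zeta \omega_X$, where $\zeta$ is a primitive $8$th root of unity, one has $L(\varphi) = 1 + \zeta^7$. By Lemma \ref{sayou}(2), an isolated fixed point of $\varphi$ has local action $\begin{pmatrix} \zeta^i & 0 \\ 0 & \zeta^j \end{pmatrix}$ with unordered $\{i, j\} \in \{\{2, 7\}, \{3, 6\}, \{4, 5\}\}$. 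Let $m_1, m_2, m_3$ count the points of each type, so $M = m_1 + m_2 + m_3$, and set $X = \sum_l (1 - g(C_l))$ over fixed curves.

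Expanding the right-hand side of the Lefschetz formula, using adjunction $C_l^2 = 2g(C_l) - 2$ to eliminate self-intersections, and splitting the complex equation into its rational and irrational parts over $\mathbb{Q}(\sqrt 2, i)$, yields two independent $\mathbb{Q}$-linear relations. Feeding in $M + 2X = r + 2$ from Lemma \ref{top-euler} and eliminating $X$, I expect to obtain the system
\begin{equation*}
m_1 + 3 m_2 = r + 2, \qquad m_1 - 3 m_2 + 2 m_3 = 2.
\end{equation*}
This is two equations in three unknowns, so an additional input is required to pin down $M$.

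To close the system, apply Proposition \ref{4-point} to the order-$4$ automorphism $\varphi^2$: the number of its isolated fixed points is $(r+6)/2$. At a type III fixed point one has $\varphi^{2*}$ with eigenvalues $\{1, \zeta^2\}$, so such points lie on $\varphi^2$-fixed curves and are not isolated for $\varphi^2$, while type I and II points do remain isolated for $\varphi^2$. Writing $2s$ for the number of isolated $\varphi^2$-fixed points that $\varphi$ does not fix (which must form $\varphi$-orbits of length two), one gets $m_1 + m_2 + 2 s = (r+6)/2$. Solving the linear system then yields $M = (3r+6)/4 + s$ and $m_3 = (r-6)/4 + 3s$, so the desired formula follows once $s = 0$ is established.

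The main obstacle is to prove $s = 0$. My plan is to combine two observations. First, by Proposition \ref{m-order4} applied to $\varphi^2$, every fixed curve of $\varphi^2$ is a smooth rational curve. A type III fixed point of $\varphi$ lies on such a rational curve on which $\varphi$ acts as an involution, contributing exactly two fixed points to $m_3$ per curve; hence $m_3$ is even. Second, the number of rational components of $X^{\varphi^2}$ is $N(\varphi^2) = (r-2)/4$ by the same proposition, giving the bound $m_3 \leq 2 N(\varphi^2) = (r-2)/2$. Using $r \equiv 2 \pmod 4$ (from Lemma \ref{sayou}(1), since $\rk T_X$ is a multiple of $\Phi(8) = 4$) and substituting $m_3 = (r-6)/4 + 3s$ into the parity and size constraints forces $s = 0$, concluding the proof with $M = (3r+6)/4$.
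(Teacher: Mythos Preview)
Your strategy is the paper's: combine the holomorphic and topological Lefschetz formulae with the isolated-point count for $\varphi^2$ (Proposition~\ref{4-point}), reducing everything to the claim $s=0$, which the paper isolates as Lemma~\ref{8-point-lem}. Your linear algebra is correct and gives $m_1=(r+14)/4-3s$, $m_2=(r-2)/4+s$, $m_3=(r-6)/4+3s$, matching the paper.

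The gap is in the last step. The constraints ``$m_3$ even'' and $m_3\le 2N(\varphi^2)=(r-2)/2$ do \emph{not} force $s=0$ in all ranks: for $r=10$ they yield $m_3=1+3s$ with $s$ odd and $s\le 1$, so $s=1$ survives; the same happens for $r=18$. Since the proposition is stated without restriction on $r$ and is then used to derive Corollary~\ref{cor-8}, you cannot assume $r\in\{6,14\}$ in advance. The remedy is to upgrade your inequality to the equality already implicit in your own setup: every $\varphi^2$-fixed rational curve is $\varphi$-stable (as $\varphi^\ast$ is trivial on $S_X$) and carries either $0$ type~III points (if pointwise $\varphi$-fixed) or exactly $2$, so $N(\varphi^2)=N(\varphi)+m_3/2$. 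Because the $\varphi$-fixed curves lie in $X^{\varphi^2}$ and are therefore rational, $2N(\varphi)=\sum_l(2-2g(C_l))=(r+2)/4-s$; substituting gives $(r-2)/4=(r-2)/4+s$, hence $s=0$ uniformly. This is precisely the mechanism behind the paper's case-by-case contradiction in Lemma~\ref{8-point-lem} (for instance, at $r=10$ with $s=1$ one would need three rational curves in $X^{\varphi^2}$, contradicting Proposition~\ref{m-order4}).
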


\begin{proof}

Let $P^{i,j}$ be an isolated fixed point given by the local action 
$\begin{pmatrix}  \zeta ^{i} & 0 \\ 0 & \zeta ^{j}  \end{pmatrix}$
and $m_{i,j}$ the number of isolated fixed points of type $P^{i,j}$.

By the holomorphic Lefschetz formulae, we have 
\begin{align*}
\begin{cases}\tag{$\sharp $}\label{8-ten}
0 & = 2m_{3,6} -m_{4,5}-\sum_{l=1}^{N}(2-2g(C_{l})),\\
2 & = m_{2,7}-m_{3,6} +m_{4,5}-\sum_{l=1}^{N}(2-2g(C_{l})).
\end{cases}
\end{align*}

We remark that
$\varphi^{2}(P^{u,v})$ is a fixed point of a non-symplectic automorphism of order 4.
It is easy to see  that 
$\varphi^{2}(P^{2,7})$ and $\varphi^{2}(P^{3,6})$ are isolated fixed points of $\varphi^{2}$.
By Proposition \ref{4-point} and Lemma \ref{8-point-lem}, we have 
\begin{equation}\label{2736}
m_{2,7}+m_{3,6}=\frac{r+6}{2}. 
\end{equation} 

By (\ref{8-ten}), (\ref{2736}) and Lemma \ref{top-euler}, 
we have $M=(3r+6)/4$.
\end{proof}

\begin{lem}\label{8-point-lem}
Let $P$ be an isolated fixed point of $\varphi^{2}$.
Then $\varphi(P) = P$.
\end{lem}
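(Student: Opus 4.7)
The plan is to exploit that $\varphi$ centralizes $\varphi^2$ and that $\varphi^*$ is trivial on $S_X$. Since $\varphi^2(\varphi(P))=\varphi(\varphi^2(P))=\varphi(P)$, the image $\varphi(P)$ is again an isolated $\varphi^2$-fixed point, and because $\varphi^2(P)=P$ the $\varphi$-orbit of $P$ has length $1$ or $2$; the goal is to exclude length-$2$ orbits.

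Applying Lemma \ref{sayou}(2) to $\varphi^2$, the eigenvalues of $(\varphi^2)^*$ on $T_PX$ are (up to order) $-1$ and a primitive $4$-th root of unity, so $\varphi^4$ has eigenvalues $(1,-1)$ at $P$ and $P$ lies on a unique smooth curve $C\subset X^{\varphi^4}$ with $T_PC$ the $(-1)$-eigenspace of $\varphi^2$. By the $p=2$ case of Theorem \ref{cla-orderp}, $X^{\varphi^4}$ is a disjoint union of one smooth curve of positive genus together with finitely many smooth rational curves. Since $\varphi^*$ is trivial on $S_X$, each rational component is the unique irreducible effective representative of its $(-2)$-class and so is preserved by $\varphi$, while the positive-genus component is the only one of its genus and is therefore also preserved. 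Hence $\varphi(C)=C$ and $\varphi|_C$ is an automorphism of $C$.

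Because $\varphi^4|_C=\mathrm{id}$ and $\varphi^2|_C$ is a nontrivial involution (its fixed points are precisely the isolated $\varphi^2$-fixed points lying on $C$, including $P$), $\varphi|_C$ has order exactly $4$, and a length-$2$ $\varphi$-orbit among isolated $\varphi^2$-fixed points corresponds to a length-$2$ $\varphi|_C$-orbit inside $\mathrm{Fix}(\varphi^2|_C)$. For rational $C$ this is impossible, since an order-$4$ automorphism of $\mathbb{P}^1$ shares its two fixed points with its involutive square. For the positive-genus component I would rule out such $2$-orbits by combining the topological and holomorphic Lefschetz formulae for $\varphi$ (which give linear relations among $m_{2,7}$, $m_{3,6}$, $m_{4,5}$ and the Euler characteristics of the $\varphi$-fixed curves) with the count $(r+6)/2$ of isolated $\varphi^2$-fixed points from Proposition \ref{4-point}: if there were $n>0$ length-$2$ orbits one would have $m_{2,7}+m_{3,6}=(r+6)/2-2n$, and substituting into the Lefschetz relations produced in the proof of Proposition \ref{8-point} would force a configuration of $\varphi$-fixed curves incompatible with the inclusion $X^\varphi\subseteq X^{\varphi^2}$ and the explicit form of $X^{\varphi^2}$ given by Proposition \ref{m-order4}.

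The main obstacle is this final global step on the positive-genus component: a purely intrinsic order-$4$ curve automorphism may genuinely have $2$-orbits among the fixed points of its square (already on elliptic curves with complex multiplication), so the rigidity must come from the K$3$-theoretic side, combining the Lefschetz counts with the explicit classification of $X^{\varphi^2}$ from Proposition \ref{m-order4}.
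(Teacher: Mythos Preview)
Your proposal is incomplete rather than wrong. The geometric argument for rational components of $X^{\varphi^4}$ is correct and pleasant: an order-$4$ automorphism of $\mathbb{P}^1$ and its square share their two fixed points, so no length-$2$ orbits can occur there. But you yourself identify the real issue: for the positive-genus component $C^{(g)}\subset X^{\varphi^4}$ there is no intrinsic curve-theoretic obstruction, and you only \emph{sketch} the remedy (``combining the topological and holomorphic Lefschetz formulae \ldots\ with Proposition~\ref{m-order4}'') without carrying it out. That sketch is precisely the paper's method, so as written your proof has a genuine gap at exactly the step that carries the content.

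The paper's proof bypasses the geometric detour entirely and is purely numerical. Letting $m$ be the number of isolated $\varphi^2$-fixed points not fixed by $\varphi$ (your $2n$), one plugs $m_{2,7}+m_{3,6}=(r+6)/2-m$ into the holomorphic Lefschetz relations~(\ref{8-ten}) to solve for $m_{2,7}$, $m_{3,6}$, $m_{4,5}$ and $\sum_l(2-2g(C_l))$ as affine functions of $m$; parity constraints then restrict $m$ to a short list for each admissible $r$, and each surviving value is eliminated either because some $m_{i,j}$ becomes negative or because the implied number of $\varphi^2$-fixed rational curves (read off from $m_{4,5}$ and $\sum_l(2-2g(C_l))$, since $P^{4,5}$-points lie on curves in $X^{\varphi^2}$) exceeds what Proposition~\ref{m-order4} allows. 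Your rational-curve observation, while correct, does not shorten this computation: the numerical case analysis is still required for the positive-genus component, and once you run it, it handles all isolated $\varphi^2$-fixed points uniformly. If you complete the numerical step you will recover exactly the paper's argument.
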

\begin{proof}
Let $m\neq 0$ be the number of such $P$.
Then $m$ satisfies $m_{2,7}+m_{3,6}+m=(r+6)/2$.
By the equation and (\ref{8-ten}), we have 
$m_{2,7}=(r+14)/4-3m/2$, $m_{3,6}=(r-2)/4+m/2$, $m_{4,5}=(r-6)/4+3m/2$ 
and $\sum_{l=1}^{N}(2-2g(C_{l}))=(r+2)/4-m/2$.

Since $m_{2,7}+m_{3,6}$ is even by (\ref{8-ten}), 
$m$ is even, $m_{2,7}$ and $m_{3,6}$ are odd.
Hence we have $m\leq (r+6)/2-1-1=(r+2)/2$.
By the parity of $m_{2,7}$, $m_{3,6}$ and $m_{4,5}$, 
if $r=$ 2, 10 and 18 (resp. 6 and 14) then 
$m=2\times $ odd number (resp. $2\times $ even number).

Assume $r=10$. Then $m=2$ or 6.
If $m=6$ then $m_{2,7}=6-9<0$. This is a contradiction.
If $m=2$ then $m_{4,5}=4$ and $\sum_{l=1}^{N}(2-2g(C_{l}))=2$.
Since $\varphi^{2}(P^{4,5})$ is a point on a irreducible fixed curve by $\varphi^{2}$, 
these two equations imply that $\varphi^{2}$ has 3 fixed 
non-singular rational curves.
This is a contradiction by Proposition \ref{m-order4}.
This settles Lemma \ref{8-point-lem} in the case $r=10$.

In other cases we can check the claim by similar the argument.
\end{proof}

\begin{rem}
$m_{2,7}=(r+14)/4$, 
$m_{3,6}=(r-2)/4$, 
$m_{4,5}=(r-6)/4$.
\end{rem}

\begin{cor}\label{cor-8}
If $X$ has a non-symplectic automorphism of order 8 then 
$\rk S_{X}=$ 6 or 14.
\end{cor}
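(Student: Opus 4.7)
The plan is to combine the integrality and non-negativity conditions for the isolated-fixed-point counts $m_{2,7}$, $m_{3,6}$, $m_{4,5}$ with structural information on the fixed locus of $\varphi^{2}$. Since $\varphi$ acts trivially on $S_{X}$, so does $\varphi^{2}$, which is non-symplectic of order $4$; hence by Proposition \ref{m-order4} the value $r = \rk S_{X}$ lies in $\{2, 6, 10, 14, 18\}$. The first move is to discard $r = 2$ via the requirement $m_{4,5} = (r-6)/4 \geq 0$ read off from the Remark preceding the corollary.

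The main step is to eliminate $r = 10$ and $r = 18$ by a parity obstruction. A fixed curve $C_{l}$ of $\varphi$ is automatically contained in the fixed locus of $\varphi^{2}$. In these two cases, Proposition \ref{m-order4} shows that every irreducible fixed curve of $\varphi^{2}$ is a smooth rational curve, so every $C_{l}$ is rational and therefore $\sum_{l=1}^{N}(2 - 2g(C_{l})) = 2N$ is even. On the other hand, the first equation of $(\sharp)$ together with the formulas in the Remark yields
\[
\sum_{l=1}^{N}(2 - 2g(C_{l})) \;=\; 2m_{3,6} - m_{4,5} \;=\; \frac{r+2}{4},
\]
which evaluates to $3$ when $r = 10$ and to $5$ when $r = 18$, both odd. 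The contradiction between the even left-hand side and the odd right-hand side rules out these two cases and leaves $r \in \{6, 14\}$.

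The crux of the argument is the passage from $\varphi$ to $\varphi^{2}$, which pins down the geometry of the candidate fixed curves. I do not expect further subtlety: once every $C_{l}$ is known to be rational, the parity mismatch is automatic, and no additional lattice-theoretic input is needed to complete the proof.
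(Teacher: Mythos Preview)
Your argument is correct and rests on the same parity obstruction as the paper, but the paper's version is shorter and more uniform. The paper simply observes that for $r\in\{2,10,18\}$ the number $M=(3r+6)/4$ from Proposition~\ref{8-point} is odd, whereas $\chi(X^{\varphi})=r+2$ is even by Lemma~\ref{top-euler} and $\sum_{l}(2-2g(C_{l}))$ is automatically even because each summand is an even integer; hence $M=\chi(X^{\varphi})-\sum_{l}(2-2g(C_{l}))$ would be even, a contradiction disposing of all three values at once. Your appeal to Proposition~\ref{m-order4} to force the $C_{l}$ to be rational is therefore unnecessary---the evenness of $\sum_{l}(2-2g(C_{l}))$ holds regardless of the genera---and once that step is dropped your treatment of $r=10,18$ collapses to the paper's argument, which also covers $r=2$ without the separate non-negativity check on $m_{4,5}$.
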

\begin{proof}
If $\rk S_{X}=$ 2, 10 or 18 then $M$ is odd by  Proposition \ref{8-point}.
But $\chi (X^{\varphi })=M+\sum _{l=1}^{N}(2-2g(C_{l}))$ is even by Lemma \ref{top-euler}.
\end{proof}

If $S_{X}= U\oplus D_{4}$ or $U(2)\oplus D_{4}$ then 
there exist $K3$ surfaces with non-symplectic automorphisms of order 8 
by Example \ref{ud4} and \ref{u2d4}.
And Sch\"{u}tt \cite[Theorem 1]{Schutt} also determines the lattice
$S_{X}$ of rank 14 explicitly. 

\begin{prop}\label{m-order8}
$X$ has a non-symplectic automorphism $\varphi$ of order 8 acting trivially on $S_{X}$ 
if and only if $S_{X}= U\oplus D_{4}$, $U(2)\oplus D_{4}$ or $U\oplus D_{4}\oplus E_{8}$.
Moreover the fixed locus $X^{\varphi }$ has the form
\begin{equation*}
X^{\varphi }=
\begin{cases}
\{ P_{1}, P_{2}, \dots , P_{6} \}\amalg E_{1} & \text{if $\rk S_{X}=6$,}\\
\{ P_{1}, P_{2}, \dots , P_{12} \}\amalg E_{1}\amalg E_{2} &  \text{if $\rk S_{X}=14$.} 
\end{cases}
\end{equation*}
\end{prop}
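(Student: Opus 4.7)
The plan is to organize the argument into four stages: cut down the list of possible N\'{e}ron-Severi lattices, exclude those that cannot carry such a $\varphi$, quote examples confirming existence for the survivors, and determine the fixed locus. By Corollary \ref{cor-8}, $\rk S_{X}\in\{6,14\}$, and Table \ref{table-2el} together with the constraint $\delta_{S_{X}}=0$ from Proposition \ref{delta} leaves the candidates $U\oplus D_{4}$ and $U(2)\oplus D_{4}$ in rank $6$, and $U\oplus E_{8}\oplus D_{4}$, $U\oplus D_{8}\oplus D_{4}$, and $U\oplus D_{4}^{\oplus 3}$ in rank $14$.

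Since $\varphi^{2}$ is an order-$4$ non-symplectic automorphism that still acts trivially on $S_{X}$, Proposition \ref{ne-order4} immediately kills $S_{X}=U\oplus D_{4}^{\oplus 3}$. For $S_{X}=U\oplus D_{8}\oplus D_{4}$ the plan is to imitate the $U\oplus D_{4}^{\oplus 3}$ argument, replacing the three $I_{0}^{\ast}$ fibers by one $I_{4}^{\ast}$ and one $I_{0}^{\ast}$ supplied by Lemma \ref{elliptic}. First, a Shioda--Tate count shows $\pi$ has at least six singular fibers; $\varphi$ preserves each one, so the induced action on the base $\mathbb{P}^{1}$ has at least six fixed points and is the identity, which forces the section to be pointwise fixed. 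Next, any multiplicity-$2$ component of a reducible fiber that meets three or more other components is pointwise fixed, because $\varphi$ fixes each such intersection point and an automorphism of $\mathbb{P}^{1}$ with $\geq 3$ fixed points is the identity; this produces the central component of $I_{0}^{\ast}$ together with the two end components of the multiplicity-$2$ chain of $I_{4}^{\ast}$, giving at least four $\varphi$-fixed rational curves on $X$. But $X^{\varphi}\subset X^{\varphi^{2}}$ and by Proposition \ref{m-order4} the right-hand side equals $\{P_{1},\dots,P_{10}\}\amalg E_{1}\amalg E_{2}\amalg E_{3}$, containing only three rational curves, a contradiction.

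Existence for the three remaining lattices is supplied by the examples of Section \ref{example}: Examples \ref{ud4} and \ref{u2d4} handle $S_{X}=U\oplus D_{4}$ and $U(2)\oplus D_{4}$, and a further example handles $U\oplus D_{4}\oplus E_{8}$. The shape of $X^{\varphi}$ then falls out of the general machinery. Proposition \ref{8-point} yields $M=6$ in rank $6$ and $M=12$ in rank $14$, while Lemma \ref{top-euler} fixes $\chi(X^{\varphi})=r+2$, so $\sum_{l=1}^{N}(2-2g(C_{l}))$ equals $2$ in rank $6$ and $4$ in rank $14$. Since every fixed curve of $\varphi$ is fixed by $\varphi^{2}$, it is contained in $X^{\varphi^{2}}$, which by Proposition \ref{m-order4} is $\{6\text{ points}\}\amalg E_{1}$ in rank $6$ and $\{10\text{ points}\}\amalg E_{1}\amalg E_{2}\amalg E_{3}$ in rank $14$; in particular all fixed curves of $\varphi$ are rational, and matching Euler characteristics forces exactly one fixed rational curve in rank $6$ and exactly two in rank $14$.

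The main obstacle is the exclusion of $U\oplus D_{8}\oplus D_{4}$, which is not detected by the numerical invariants $M$ and $\chi(X^{\varphi})$ that suffice for the other rank-$14$ lattice $U\oplus E_{8}\oplus D_{4}$. The key geometric ingredient is the counting of pointwise-fixed components coming from the $I_{4}^{\ast}$ and $I_{0}^{\ast}$ fibers, confronted with the three-rational-curve ceiling imposed on $X^{\varphi}$ by Proposition \ref{m-order4}. Once that step is in place, everything else is the same numerics-plus-inclusion pattern used throughout Section \ref{4}.
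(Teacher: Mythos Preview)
Your overall strategy matches the paper's except at the rank-$14$ exclusion step: the paper handles $S_X=U\oplus D_8\oplus D_4$ (and identifies $U\oplus E_8\oplus D_4$ as the unique rank-$14$ survivor) simply by invoking Sch\"utt \cite[Theorem~1]{Schutt}, whereas you attempt a direct geometric argument. That argument has a gap. You assert that $\varphi$ preserves each of the (at least six) singular fibers of $\pi$, but acting trivially on $S_X$ only forces $\varphi$ to preserve the \emph{reducible} fibers, since every irreducible singular fiber lies in the same numerical class as the generic fiber and may be moved by the induced base automorphism. With only the two reducible fibers $I_4^\ast$ and $I_0^\ast$ you obtain only two fixed base points, which is not enough to force the base action to be the identity; consequently the section need not be pointwise fixed, and your fourth $\varphi$-fixed rational curve is not available.

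The argument is easily repaired without the section. The three components you single out---the central component of $I_0^\ast$ and the two extremal multiplicity-$2$ components of $I_4^\ast$---are genuinely pointwise fixed by $\varphi$, but the numerics of $X^\varphi$ itself allow only two rational curves: Proposition~\ref{8-point} gives $M=12$, Lemma~\ref{top-euler} gives $\chi(X^\varphi)=16$, so the curve part has Euler characteristic $4$, and since every fixed curve lies in $X^{\varphi^2}$, which by Proposition~\ref{m-order4} contains only rational curves, one gets $N=2$. Three pointwise-fixed rational curves already contradicts $N=2$, and no appeal to the section is needed. The remainder of your proof---the rank restriction via Corollary~\ref{cor-8}, the elimination of $U\oplus D_4^{\oplus 3}$ through Proposition~\ref{ne-order4}, existence via the examples of Section~\ref{example}, and the fixed-locus computation---agrees with the paper.
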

\begin{proof}
Note $\chi (C^{(g)} \amalg E_{1} \amalg \dots \amalg E_{N})=(2+r)/4$ 
by Lemma \ref{top-euler} and Proposition \ref{8-point}.
We remark that $X^{\varphi ^{2}}$ does not contain non-singular curve with genus $\geq 1$ 
by Proposition \ref{m-order4}.
Thus $N=(2+r)/8$.
\end{proof}

\section{Order 16}\label{16}
In this section, let $\varphi $ be a non-symplectic automorphism of order 16.
And we shall describe $X^{\varphi}= \{ P_{1},  \dots , P_{M} \} \amalg C^{(g)} \amalg E_{1} \amalg \dots \amalg E_{N}$.
We remark that, by Corollary \ref{cor-8}, that
if $X$ has a non-symplectic automorphism of order 16 then $\rk S_{X}=$ 6 or 14.

\begin{prop}\label{16-point}
Let $r$ be the Picard number of $X$. Then the number of isolated points $M$ is 
$(3r+6)/4$.
\end{prop}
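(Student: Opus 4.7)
The plan is to mimic the proof of Proposition \ref{8-point}, now with $\varphi$ of order 16 and $\varphi^{2}$ of order 8 playing the role that the order-4 power played there. By Lemma \ref{sayou}(2), an isolated fixed point of $\varphi$ is of type $P^{i,j}$ for some $(i,j)$ with $1\le i<j\le 15$ and $i+j\equiv 1\pmod{16}$; this leaves the seven types $(2,15),(3,14),(4,13),(5,12),(6,11),(7,10),(8,9)$. Write $m_{i,j}$ for the corresponding counts, so that $M=\sum m_{i,j}$.

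First I would apply the holomorphic Lefschetz formula to $\varphi$. By Serre duality $L(\varphi)=1+\zeta^{-1}$ where $\zeta=\zeta_{16}$, and this equals
\[
\sum_{(i,j)}\frac{m_{i,j}}{(1-\zeta^{i})(1-\zeta^{j})}+\sum_{l=1}^{N}\left(\frac{1-g(C_l)}{1-\zeta}-\frac{\zeta\, C_l^{2}}{(1-\zeta)^{2}}\right).
\]
Clearing denominators and reducing modulo $\Phi_{16}(x)=x^{8}+1$, the minimal polynomial of $\zeta$ over $\mathbb{Q}$, produces eight linear relations among the $m_{i,j}$, the genera $g(C_l)$, and the self-intersections $C_l^{2}$.

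The second ingredient is comparison with $\varphi^{2}$. A fixed point of type $P^{i,j}$ remains isolated for $\varphi^{2}$ exactly when $i,j\not\equiv 0\pmod{8}$; among the seven types only $(8,9)$ fails, so the $m_{8,9}$ points of that type lie on a fixed curve of $\varphi^{2}$, while the other six types contribute to the isolated fixed locus of $\varphi^{2}$. By Proposition \ref{8-point} applied to $\varphi^{2}$, that locus has cardinality $(3r+6)/4$. Granted the analog of Lemma \ref{8-point-lem}, that every isolated fixed point of $\varphi^{2}$ is already fixed by $\varphi$, we obtain
\[
m_{2,15}+m_{3,14}+m_{4,13}+m_{5,12}+m_{6,11}+m_{7,10}=\frac{3r+6}{4}.
\]

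Combining this identity with the topological Lefschetz formula $\chi(X^{\varphi})=r+2$ (Lemma \ref{top-euler}) and the eight holomorphic Lefschetz relations forces $m_{8,9}=0$ and hence $M=(3r+6)/4$. The main obstacle is precisely the analog of Lemma \ref{8-point-lem}. Introducing $m\ge 0$ for the number of isolated fixed points of $\varphi^{2}$ that lie in $\varphi$-orbits of size two, one can solve the linear system for each $m_{i,j}$ as an affine function of $r$ and $m$; the conclusion $m=0$ should then follow from the positivity and integrality of the $m_{i,j}$ together with the sharp bound on $\sum(2-2g(C_l))$ imposed by Proposition \ref{m-order8}, which states that $X^{\varphi^{2}}$ contains only one or two fixed rational curves and no higher-genus component. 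This last step is handled separately in each of the two admissible ranks $r=6,14$ given by Corollary \ref{cor-8}.
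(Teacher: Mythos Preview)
Your proposal is correct and follows essentially the same approach as the paper: the paper's proof is the single line ``It is similar to the proof of Proposition \ref{8-point}'', and what you have written is precisely a fleshed-out version of that analogy, including the need for an order-16 analogue of Lemma \ref{8-point-lem} and the case split on $r\in\{6,14\}$. Your identification of which types remain isolated under $\varphi^{2}$ and the conclusion $m_{8,9}=0$ match the paper's subsequent Remark exactly.
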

\begin{proof}
It is similar to the proof of Proposition \ref{8-point}.
\end{proof}

\begin{rem}
$m_{2,15}=(r+10)/4$, 
$m_{3,14}=(r+2)/8$, 
$m_{4,13}=(r-6)/8$, 
$m_{5,12}=(r-6)/8$, 
$m_{6,11}=(r-6)/8$, 
$m_{7,10}=1$, 
$m_{8,9}=0$.
\end{rem}

Sch\"{u}tt \cite[Theorem 1]{Schutt} proved that 
the $K3$ surface with a non-symplectic automorphism of order 16 
and $\rk S_{X}=6$ is unique 
and that $S_{X}= U\oplus D_{4}$.

By Proposition \ref{m-order8}, 
if $X$ has a non-symplectic automorphism of order 16 and $\rk S_{X}=14$
then $S_{X}= U\oplus D_{4}\oplus E_{8}$.
Indeed there exists a $K3$ surface with non-symplectic automorphisms of order 16 
and $S_{X}= U\oplus D_{4}\oplus E_{8}$. 
See Example \ref{ud4e8}.

\begin{prop}\label{m-order16}
$X$ has a non-symplectic automorphism $\varphi$ of order 16 acting trivially on $S_{X}$ 
if and only if $S_{X}= U\oplus D_{4}$ or $U\oplus D_{4}\oplus E_{8}$.
Moreover the fixed locus $X^{\varphi }$ has the form
\begin{equation*}
X^{\varphi }=
\begin{cases}
\{ P_{1}, P_{2}, \dots , P_{6} \}\amalg E_{1} & \text{if $S_{X}=U\oplus D_{4}$,}\\
\{ P_{1}, P_{2}, \dots , P_{12} \}\amalg E_{1}\amalg E_{2} &  \text{if $S_{X}=U\oplus D_{4}\oplus E_{8}$.} 
\end{cases}
\end{equation*}
\end{prop}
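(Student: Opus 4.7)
The plan is to first pin down the two possibilities for $S_X$ using the classifications already established for $\varphi^2$, and then read off the fixed locus from the topological and holomorphic Lefschetz data.

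I would start from Corollary \ref{cor-8}, which forces the Picard number $r \in \{6,14\}$. Since $\varphi^{2}$ is a non-symplectic automorphism of order $8$ acting trivially on $S_X$, Proposition \ref{m-order8} restricts $S_X$ to the three possibilities $U\oplus D_{4}$, $U(2)\oplus D_{4}$, and $U\oplus D_{4}\oplus E_{8}$. For $r=6$, Sch\"utt's classification \cite[Theorem 1]{Schutt}, applied to the order-$16$ case where $\rk T_X = 16$, asserts that the $K3$ surface admitting such a $\varphi$ is unique and has $S_X=U\oplus D_{4}$, thereby eliminating $U(2)\oplus D_{4}$. For $r=14$, the only surviving candidate is $U\oplus D_{4}\oplus E_{8}$. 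Existence in both cases is supplied by Examples \ref{ud4} and \ref{ud4e8} of Section \ref{example}.

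Next I would count the components of $X^{\varphi}$. Proposition \ref{16-point} gives $M=(3r+6)/4$, i.e.\ $M=6$ for $r=6$ and $M=12$ for $r=14$, matching the statement. Writing $X^{\varphi}=\{P_{1},\dots,P_{M}\}\amalg C_{1}\amalg\cdots\amalg C_{N}$ and invoking Lemma \ref{top-euler},
\begin{equation*}
\sum_{l=1}^{N}\bigl(2-2g(C_{l})\bigr) \;=\; \chi(X^{\varphi}) - M \;=\; (r+2) - \frac{3r+6}{4} \;=\; \frac{r+2}{4}.
\end{equation*}

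Finally, since $X^{\varphi}\subseteq X^{\varphi^{2}}$, every fixed curve of $\varphi$ is a fixed curve of $\varphi^{2}$, and by Proposition \ref{m-order8} the locus $X^{\varphi^{2}}$ contains only isolated points and non-singular rational curves. Hence each $C_{l}$ is rational and the displayed sum equals $2N$, so $N=(r+2)/8$; this yields $N=1$ when $S_X=U\oplus D_{4}$ and $N=2$ when $S_X=U\oplus D_{4}\oplus E_{8}$, exactly as asserted. The main technical content of the argument is the lattice reduction via $\varphi^{2}$ together with Sch\"utt's uniqueness at rank $6$; once these are in hand, the component count is forced by the Lefschetz formulae already packaged in Lemma \ref{top-euler} and Proposition \ref{16-point}.
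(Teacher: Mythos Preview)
Your proposal is correct and follows essentially the same approach as the paper: the lattice classification via Corollary~\ref{cor-8}, Proposition~\ref{m-order8}, and Sch\"utt's uniqueness result is exactly what the paper records in the paragraph preceding Proposition~\ref{m-order16}, and the fixed-locus computation (Lemma~\ref{top-euler} plus Proposition~\ref{16-point}, combined with the observation that $X^{\varphi^{2}}$ contains no curves of positive genus) is precisely the argument the paper alludes to by ``similar to the proof of Proposition~\ref{m-order8}.''
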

\begin{proof}
It is similar to the proof of Proposition \ref{m-order8}.
\end{proof}

\section{Examples}\label{example}

In this section, we give examples of $K3$ surfaces with a non-symplectic automorphism of 2-power order.
We remark that these $K3$ surfaces have an elliptic fibration from Remark \ref{elpss} and Table \ref{table-2el}.

\begin{ex}\label{u}[\cite[(3.1)]{Kondo1}](\textbf{Case: $S_{X}=U$}) 

$X:y^{2}=x^{3}+x+t^{11}$, 
$\varphi (x, y, t)=(-x,\zeta _{4}y, -t)$.
\end{ex}

\begin{ex}\label{u2}(\textbf{Case: $S_{X}=U(2)$}) 
We do not have an explicit example of $S_{X}=U(2)$ 
though it seems likely that such examples exist.

For example, let $([x_{0}: x_{1}], [y_{0}: y_{1}])$ be the 
bi-homogeneous coordinates on $\mathbb{P}^{1} \times \mathbb{P}^{1}$ 
and $\iota$ an involution of $\mathbb{P}^{1} \times \mathbb{P}^{1}$ given by
$([x_{0}: x_{1}], [y_{0}: y_{1}]) \mapsto ([x_{0}: -x_{1}], [y_{0}: -y_{1}])$.
We remark that $\iota$ has 4 isolated fixed points.
Consider a smooth divisor $C$ in $\mathbb{P}^{1} \times \mathbb{P}^{1}$ of bidegree $(4,4)$ 
such that $f(x_{0},-x_{1},y_{0},-y_{1})=-f(x_{0},x_{1},y_{0},y_{1})$
where $f$ is the defining equation of $C$.
Let $X$ the double cover of $\mathbb{P}^{1} \times \mathbb{P}^{1}$ 
branched along $C$. 
Then $X$ is a $K3$ surface with $U (2)\subset S_{X}$ and for generic $f$ as above we
expect that  $S_{X} = U (2)$.
And the involution $\iota$ induces an automorphism $\varphi$ 
which satisfies $\varphi ^{\ast }\omega_X = \zeta _{4} \omega_{X}$.
\end{ex}

\begin{ex}\label{ud4}[\cite{Schutt}](\textbf{Case: $S_{X}=U\oplus D_{4}$}) 

$X:y^{2}=x^{3}+t^{2}x+t^{11}$, 
$\varphi (x, y, t)=(\zeta _{16}^{2}x,\zeta _{16}^{3} y, \zeta _{16}^{2} t)$.
\end{ex}

\begin{ex}\label{u2d4}[\cite[Proposition 4 (15)]{machida-oguiso}](\textbf{Case: $S_{X}=U(2)\oplus D_{4}$})

Let $X$ be the minimal resolution of the surface 
$\widetilde{X}:=\{z^{2}=x_{0}(x_{0}^{4}x_{2}+x_{1}^{5}-x_{2}^{5})\}$ 
having 5 ordinary double points $[0:1:\zeta_{5}^{i}:0]$ ($i=0, 1, 2, 3, 4$) 
and $\varphi ([x_{0}:x_{1}:x_{2}:z])=[x_{0}:\zeta_{4}x_{1}:\zeta_{4}x_{2}:\zeta_{8}^{5}z]$.
\end{ex}

\begin{ex}\label{ue8}[\cite[(3.2)]{Kondo1}](\textbf{Case: $S_{X}=U\oplus E_{8}$}) 

$X:y^{2}=x^{3}-t^{5}\prod_{i=1}^{6}(t-\zeta_{6}^{i})$, 
$\varphi (x, y, t)=(-x,\zeta _{4}y, -t)$.
\end{ex}

\begin{ex}\label{ud8}(\textbf{Case: $S_{X}=U\oplus D_{8}$}) 

$X:y^{2}=x^{3}+t\prod_{i=1}^{6}(t-\zeta_{6}^{i})x^{2}+t\prod_{i=1}^{6}(t-\zeta_{6}^{i})$, 
$\varphi (x, y, t)=(-x, \zeta _{4}y, -t)$.
\end{ex}

\begin{ex}\label{ud4d4}(\textbf{Case: $S_{X}=U\oplus D_{4}^{\oplus 2}$}) 

$X:y^{2}=x^{3}-t^{3}\prod_{i=1}^{6}(t-\zeta_{6}^{i})$, 
$\varphi (x, y, t)=(-x, \zeta _{4}y, -t)$.
\end{ex}

\begin{ex}\label{u2d4d4}[\cite[\S 2.1]{Kondo2}](\textbf{Case: $S_{X}=U(2)\oplus D_{4}^{\oplus 2}$}) 

Let $\{ [\lambda_{i} : 1]\}$ be a set of distinct 8 points on the projective line.
Let $([x_{0}: x_{1}], [y_{0}: y_{1}])$ be the  bi-homogeneous coordinates on $\mathbb{P}^{1} \times \mathbb{P}^{1}$.  
Consider a smooth divisor $C$ in $\mathbb{P}^{1} \times \mathbb{P}^{1}$ of bidegree $(4,2)$ given by
\[
y_{0}^{2} \cdot \prod^{4}_{i=1} (x_{0} - \lambda_{i} x_{1}) + y_{1}^{2} \cdot \prod^{8}_{i=5} (x_{0} - \lambda_{i} x_{1}) = 0.
\]
Let $L_{0}$ (resp. $L_{1}$) be the divisor defined
by $y_{0} = 0$ (resp. $y_{1} = 0$).
Let $\iota$ be an involution of 
$\mathbb{P}^{1} \times \mathbb{P}^{1}$ given by
\[
([x_{0}: x_{1}], [y_{0}: y_{1}])  \mapsto ([x_{0}: x_{1}], [y_{0}: -y_{1}])
\]
which preserves $C$, $L_{0}$ and $L_{1}$.  

Note that the double cover of $\mathbb{P}^{1} \times \mathbb{P}^{1}$ 
branched along $C + L_{0} +L_{1}$ has 8 rational double points of type
$A_{1}$ and its minimal resolution $X$ is a $K3$ surface. 
The involution $\iota$ lifts to an automorphism $\varphi$ 
which satisfies $\varphi ^{\ast }\omega_X = \zeta _{4} \omega_{X}$.
\end{ex}

\begin{ex}\label{ud4e8}(\textbf{Case: $S_{X}=U\oplus E_{8}\oplus D_{4}$}) 

$X:y^{2}=x^{3}+t^{2}x+t^{7}$, 
$\varphi (x, y, t)=(\zeta _{16}^{10}x,\zeta _{16}^{7} y, \zeta _{16}^{2} t)$.
\end{ex}

\begin{ex}\label{ud4d8}(\textbf{Case: $S_{X}=U\oplus D_{8}\oplus D_{4}$}) 

$X:y^{2}=x^{3}+t\prod_{i=1}^{4}(t-\zeta_{4}^{i})x^{2}+t^{3}\prod_{i=1}^{4}(t-\zeta_{4}^{i})$, 
$\varphi (x, y, t)=( -x,\zeta _{4} y, -t)$.
\end{ex}

\begin{ex}\label{ue8e8}[\cite[(3.4)]{Kondo1}](\textbf{Case: $S_{X}=U\oplus E_{8}^{\oplus 2}$})

$X:y^{2}=x^{3}-t^{5}(t-1)(t+1)$, 
$\varphi (x, y, t)=(-x,\zeta _{4}y, -t)$.
\end{ex}

\begin{ex}\label{u2e8e8}[\cite{Schutt}](\textbf{Case: $S_{X}=U\oplus E_{8}\oplus D_{8}$}) 

$X:y^{2}=x^{3}+tx^{2}+t^{7}$, 
$\varphi (x, y, t)=( -x,\zeta _{4} y, -t)$.
\end{ex}

\begin{rem}
Assume that an elliptic $K3$ surface $\pi :X\rightarrow \mathbb{P}^{1}$  
is given by a Weierstrass equation.
Then it is easy to see types of singular fibers of $\pi$ 
by the discriminant and the $j$-invariant.
And we have the rank of the Mordell-Weil group of $\pi$ by \cite[\S 5]{Shioda}.
By the Shioda-Tate formula 
$\rk S_{X}=2+\rk M.W.+
\sum _{F:\text{fiber}}(\sharp \{ \text{components of } F \}-1)$, 
we can determine $S_{X}$.
See also Lemma \ref{elliptic}.

For example, in Example \ref{ud4e8}, $\pi :X\rightarrow \mathbb{P}^{1}$ 
has reducible singular fibers of type II$^{\ast }$ and of type I$_{0}^{\ast }$.
It follows that the rank of the Mordell-Weil group is 0 and 
$\rk S_{X}=2+0+(9-1)+(5-1)=14$.
\end{rem}

\end{document}